\def\@strippedMR{} \def\@scanforMR#1#2#3\endscan{%
  \ifx#1M\ifx#2R\def\@strippedMR{#3}%
  \else\def\@strippedMR{#1#2#3}%
  \fi\fi} \renewcommand\MR[1]{\relax \ifhmode\unskip\spacefactor3000
  \space\fi \@scanforMR#1\endscan
  MR\MRhref{\@strippedMR}{\@strippedMR}} \makeatother
\newcommand{\R}{\mathbb{R}} \newcommand{\Z}{\mathbb{Z}}
\newcommand{\T}{\mathbb{T}} 
\newcommand{\N}{\mathbb{N}}
\theoremstyle{plain} \newtheorem{theorem}{Theorem}[section]
\newtheorem{lemma}[theorem]{Lemma}
\newtheorem{coro}[theorem]{Corollary}
\newtheorem{prop}[theorem]{Proposition}
\theoremstyle{definition} \newtheorem{definition}[theorem]{Definition}
\theoremstyle{remark} \newtheorem{remark}{Remark}
\newcommand{\eps}{\varepsilon} \newcommand{\lb}{\langle}
\newcommand{\rb}{\rangle}
\newcommand{\ls}{\lesssim}
\begin{document}

\title[Critical NLS on torus]%
{Periodic nonlinear Schr\"odinger equation in critical $H^s(\T^n)$ spaces}
\author[Y.~Wang]{Yuzhao~Wang}

\subjclass[2000]{35Q55}

\address{Department of Mathematics and Physics, North China Electric Power University, Beijing 102206, China}
\email{wangyuzhao2008@gmail.com}

\begin{abstract}
In this paper we prove some multi-linear Strichartz estimates for solutions to the linear Schr\"odinger equations on torus $\T^n$. Then we apply it to get some local well-posed results for nonlinear Schr\"odinger equation in critical $H^{s}(\T^n)$ spaces. As by-products, the energy critical global well-posed results and energy subcritical global well-posed results with small initial data are also obtained.
\end{abstract}
\keywords{multi-linear Strichartz estimates, period NLS, critical spaces, global well-posed}
\maketitle
\section{Introduction and main result}\label{intro}

\noindent
In this paper we study the initial value problem
\begin{equation}
	\label{eq:nls}
	\begin{cases}
		i u_t - \Delta u  = \pm |u|^{2k} u  \\
		u(0,x) = \phi(x),
	\end{cases}~(x, t)  \in \R \times \T^n,
\end{equation}
where $u$ is a complex-valued function, $k\in \N$ and $\T^n=\R^n/(2\pi \Z)^n$ with $n\ge 1$. Equation \eqref{eq:nls} is called defocusing when the sign is $-$ and focusing with $+$ sign. The solution of \eqref{eq:nls} is invariant under the scaling
\[
u(t,x) \to u_\lambda(t,x)=\lambda^{1/k} u(\lambda^2 t, \lambda x),
\]
with initial data $u_{\lambda,0}=\lambda^{1/k} \phi(\lambda x)$. Let
\begin{align}\label{ind}
s_{n,k}=\frac n2-\frac1k
\end{align}
and it is easy to see that $H^{s_{n,k}}$ is the critical space respect to scaling, see \cite{CA-WE} for more argument on critical spaces. For the equation \eqref{eq:nls} on Euclidean spaces, local well-posed theory and small data scattering theory in critical spaces was established by Cazenave and Weissler \cite{CA-WE}. And Colliander-Kell-Staffilani-Takaoka-Tao \cite{I-team08} extended it to any large data in defocusing case. But there were few well-posed results for periodic case \eqref{eq:nls} in critical spaces until recently.

In \cite{B93a}, Bourgain studied the semilinear Schr\"odinger equations by Strichartz estimates and $X^{s,b}$ spaces, and got the following results:

\begin{theorem}[Bourgain\cite{B93a}]\label{Bourgain} For the initial value problem \eqref{eq:nls}, we have
\begin{itemize}
  \item If $n= 1$ and $k\ge 3$, \eqref{eq:nls} is local well-posed for $\phi \in H^s(\T)$, provided $s> s_{1,k}$.
  \item If $n= 2, 3$ and $k\ge 2$, \eqref{eq:nls} is local well-posed for $\phi \in H^s(\T^n)$, provided $s> s_{n,k}$.
  \item If $n\ge4$ and $k\ge 1$, \eqref{eq:nls} is local well-posed for $\phi \in H^s(\T^n)$, provided $s> s_{n,k}$.
\end{itemize}
Additionally, if $s_{n,k} <1$, then \eqref{eq:nls} is globally well-posed for sufficiently small initial data $\phi \in H^1(\T^n)$.
\end{theorem}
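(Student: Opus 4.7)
I would follow Bourgain's $X^{s,b}$ approach. Set
\begin{equation*}
\|u\|_{X^{s,b}} = \bigl\|\lb\xi\rb^s\lb\tau+|\xi|^2\rb^b \hat u(\tau,\xi)\bigr\|_{L^2_\tau \ell^2_\xi(\R\times\Z^n)},
\end{equation*}
rewrite \eqref{eq:nls} as a Duhamel integral equation, and seek a fixed point of
\begin{equation*}
\Phi(u)(t) = \eta(t/T)e^{it\Delta}\phi \mp i\,\eta(t/T)\int_0^t e^{i(t-\tau)\Delta}\bigl(|u|^{2k}u\bigr)(\tau)\,d\tau,
\end{equation*}
where $\eta$ is a smooth time cut-off. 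The standard $X^{s,b}$ machinery gives, for $b>1/2$ close to $1/2$ and some $b'\in(b-1,0)$ suitably chosen, the linear bounds $\|\eta(t/T)e^{it\Delta}\phi\|_{X^{s,b}}\ls\|\phi\|_{H^s}$ and
\begin{equation*}
\Bigl\|\eta(t/T)\int_0^t e^{i(t-\tau)\Delta} F\,d\tau\Bigr\|_{X^{s,b}} \ls T^{1-b+b'}\|F\|_{X^{s,b'}},
\end{equation*}
so the scheme reduces to a multi-linear estimate of the form
\begin{equation*}
\bigl\||u|^{2k}u\bigr\|_{X^{s,b'}} \ls \|u\|_{X^{s,b}}^{2k+1}, \qquad s>s_{n,k}.
\end{equation*}

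The crux is this nonlinear estimate. I would derive it by combining a Littlewood--Paley decomposition with Bourgain's periodic Strichartz inequalities, which via the transference principle take the dyadic form
\begin{equation*}
\|P_N u\|_{L^{2(k+1)}_{t,x}([0,1]\times \T^n)} \ls N^{\sigma+\eps}\|P_N u\|_{X^{0,b}},
\end{equation*}
with $\sigma=s_{n,k}$ in each of the three regimes listed; the three bullet points correspond exactly to those $(n,k)$ for which $2(k+1)$ lies in Bourgain's range with only an $\eps$-loss (coming from lattice point counts on spheres $|m|^2=\mu$). Expanding $|u|^{2k}u$ as a $(2k+1)$-linear form, dyadically decomposing each factor, and applying H\"older with the Strichartz bound above reduces matters to summing a geometric series in the dyadic frequencies; the strict inequality $s>s_{n,k}$ is precisely what leaves room to absorb the $N^\eps$ factor and perform this summation.

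With the nonlinear estimate in hand, $\Phi$ is a contraction on a ball of radius $\sim\|\phi\|_{H^s}$ in $X^{s,b}$ once $T$ is chosen so that $T^{1-b+b'}\|\phi\|_{H^s}^{2k}$ is small, giving local well-posedness with lifespan depending only on $\|\phi\|_{H^s}$. For the global statement under $s_{n,k}<1$, small $H^1$ data automatically lie in the critical space, and conservation of mass and energy
\begin{equation*}
M(u)=\int|u|^2,\qquad E(u)=\tfrac12\int|\nabla u|^2 \pm \tfrac{1}{2k+2}\int|u|^{2k+2},
\end{equation*}
together with Gagliardo--Nirenberg (using smallness in the focusing case to dominate the potential by the kinetic term, which is possible exactly because $s_{n,k}<1$) yields a uniform a priori bound $\|u(t)\|_{H^1}\ls\|\phi\|_{H^1}$. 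Iterating the local theory on intervals of uniform length then extends the solution globally.

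The main obstacle is the multi-linear Strichartz estimate in the dimensions where Bourgain's inequality on $\T^n$ carries a genuine $\eps$-derivative loss (principally $n=2,3$ and the $n=1$, $k\ge 3$ endpoint): there the dyadic summation must be executed with care, and the hypothesis $s>s_{n,k}$, rather than $s=s_{n,k}$, enters essentially in order to absorb the loss.
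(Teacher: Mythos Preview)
Your proposal is a faithful sketch of Bourgain's original $X^{s,b}$ argument, but note that the paper you are working in does \emph{not} supply its own proof of this statement: Theorem~\ref{Bourgain} is stated there as a cited result from \cite{B93a}, with no proof given. There is therefore nothing in the paper to compare your argument against; the paper's own contributions (Theorem~\ref{main_th}, Propositions~\ref{prop:L2} and \ref{prop:nonlinear}) concern the \emph{critical} endpoint $s=s_{n,k}$ and use $U^p/V^p$-based spaces rather than $X^{s,b}$.

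That said, your outline is the correct one for the subcritical result you were asked about: the $X^{s,b}$ contraction scheme, the transfer of Bourgain's periodic $L^{2(k+1)}$ Strichartz estimate to $X^{0,b}$, the dyadic H\"older/Littlewood--Paley reduction, and the use of the slack $s>s_{n,k}$ to absorb the $N^\eps$ loss are exactly the ingredients of \cite{B93a}. One small remark: in the global part you write that ``small $H^1$ data automatically lie in the critical space,'' but what you actually need (and use) is that the local lifespan depends only on the $H^1$ norm when $s=1>s_{n,k}$; the a priori $H^1$ bound from mass and energy then allows iteration. Otherwise the proposal is sound.
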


\begin{remark}
Bourgain \cite{B93a} considered more general setting $k\in \R_+$, and here we just list the cases for $k\in \N$, since we only consider such cases in this paper.
\end{remark}

\begin{remark}
This result can be summarized as $H^s$ subcritical local well posedness and $H^1$ subcritical global well-posedness. The global results are relay on the Conservation Laws, for solutions $u$ of \eqref{eq:nls} we have energy conservation
\begin{equation}\label{Energy}
  E(u(t))=\frac12 \int_{\T^n}|\nabla u(t,x)|^2dx+ \frac16
  \int_{\T^n}|u(t,x)|^{2k+2} dx=E(\phi),
\end{equation}
and Mass-conservation
\begin{equation}\label{Mass}
  M(u(t))=\frac12 \int_{\T^n}|u(t,x)|^2dx=M(\phi).
\end{equation}
\end{remark}

Recently, Herr, Tataru and Tzvetkov \cite{HTT11} extend Bourgain's results to critical regularity when $n=3$ and $k=2$, which is energy critical case and they got,
\begin{theorem}[Herr, Tataru and Tzvetkov \cite{HTT11}]\label{HTT11} For the initial value problem
\begin{equation}\label{eq:nls1}
  \begin{cases}
	 i u_t - \Delta u  = \pm |u|^{4} u  \\
     u(0,x) = \phi(x),
  \end{cases}~(x, t)  \in \T^3 \times \R,
\end{equation}
we have local well-posed for $\phi \in H^s(\T^3)$, provided $s\ge 1$. If in addition the $H^1(\T^3)$ norm of the initial data $\phi$ is sufficient small, the solution could be extended to any time.
\end{theorem}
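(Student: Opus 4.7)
The plan is to run a contraction-mapping argument on the Duhamel formulation, but since the problem is critical at $s=1$ one cannot afford a $T^\delta$ gain, so the right framework is the atomic $U^p_\Delta/V^p_\Delta$ spaces of Koch--Tataru adapted to the Schr\"odinger flow. Concretely, I would introduce the scale-invariant solution space $X^s$ built on $U^2_\Delta$-atoms and its dual-like companion $Y^s$ built on $V^2_\Delta$, so that the linear propagator $e^{it\Delta}\phi$ sits in $X^s$ with norm $\|\phi\|_{H^s}$, and the Duhamel term $\int_0^t e^{i(t-t')\Delta}F\,dt'$ is controlled in $X^s$ by a $Y^{-s}$-type norm of $F$ via the transfer principle.

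Given this functional setting, local well-posedness reduces to one key nonlinear estimate,
\begin{equation*}
\Bigl\| \int_0^t e^{i(t-t')\Delta}\bigl(|u|^4 u\bigr)(t')\,dt' \Bigr\|_{X^s([0,T])} \lesssim \|u\|_{X^s([0,T])}^5,
\end{equation*}
together with a continuity-in-$T$ property so that the $X^s$-norm of the free evolution on $[0,T]$ can be made small for small $T$ (when $\phi$ is fixed) or for small $\phi$ (when $T$ is large). I would prove the quintilinear estimate by dyadic Littlewood--Paley decomposition $u=\sum_N P_N u$, reducing matters to controlling $\prod_{j=1}^5 P_{N_j} u_j$ with $N_1\ge N_2\ge\cdots\ge N_5$; the sum over the lowest four frequencies is summable, and the gain against the highest frequency $N_1$ has to come precisely from the multilinear Strichartz inequalities proved earlier in the paper, applied after grouping factors into high-high and high-low $L^2_{t,x}$ pairings in the spirit of the HTT argument. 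The $s\ge 1$ hypothesis is exactly what is needed so that the logarithmic/$\varepsilon$-losses produced by the $\T^3$ Strichartz inequality are absorbed.

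Once the quintilinear bound is in place, contraction on a small ball of $X^s$ yields a unique local solution; persistence of regularity and continuous dependence follow from the same multilinear estimate applied to differences. For the small-data global statement, I would observe that the quintilinear estimate together with the transfer principle upgrades to the bound
\begin{equation*}
\|u\|_{X^1(I)} \lesssim \|u(t_0)\|_{H^1} + \|u\|_{X^1(I)}^5,
\end{equation*}
so a standard bootstrap shows that as long as the $H^1$-norm remains below a fixed threshold the $X^1$-norm on any interval stays bounded; conservation of energy \eqref{Energy} and mass \eqref{Mass} then propagate the $H^1$-smallness for all time, allowing the local theory to be iterated indefinitely.

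The hard part is unquestionably the critical quintilinear estimate in $X^s$: unlike the subcritical case one has no short-time smallness, so every dyadic piece must be summed and every Strichartz loss must be compensated by a genuine multilinear gain in the spacing of the frequencies $N_1,\dots,N_5$. This is exactly where the multilinear Strichartz estimates on $\T^n$, sharp up to $N^\varepsilon$, are used decisively, and it is the place where the $\T^3/k=2$ case is most delicate because the scaling is critical for the $L^4_{t,x}$ Strichartz inequality.
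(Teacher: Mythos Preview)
Your proposal is correct and matches the approach. Note that this theorem is stated in the paper as a cited result from \cite{HTT11}, not proved here independently; however, the paper's proof of the more general Theorem~\ref{main} (via Proposition~\ref{prop:L2}, Proposition~\ref{prop:nonlinear}, and the contraction at the end of Section~\ref{main_non}) specializes to exactly the scheme you outline when $n=3$, $k=2$: the $X^s/Y^s$ spaces built on $U^2_\Delta/V^2_\Delta$, the duality estimate of Lemma~\ref{inhom} for the Duhamel term, a dyadic decomposition with ordered frequencies, Cauchy--Schwarz to split the six factors into two triples, and the multilinear Strichartz estimate (Theorem~\ref{main_th}) to produce the $(N_{k+1}/N_1)^\delta$ gain needed for summation.

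One small correction to your closing remark: the $L^4_{t,x}$ endpoint obstruction on the torus is a feature of dimension $n=4$, not $n=3$ (cf.\ the remark after Theorem~\ref{main} on equation~\eqref{eq:nls2}). In $\T^3$ the admissible range is $p>4$, and the argument works by choosing $p$ slightly above $4$ and a complementary $q$ slightly above the scaling exponent, exactly as in \eqref{eq:p_var}; so the $\T^3$ quintic case is not an endpoint in this sense.
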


\begin{remark}
By \eqref{ind}, we see that the critical index of \eqref{eq:nls1} is $s_{3,2}=1$, which means that \eqref{eq:nls1} is $H^1$ critical case. This result extend Bourgain's result to the critical end point space $H^1$.
\end{remark}

\begin{remark}
In Bourgain's fundamental paper \cite{B93a}, the sharp Strichartz estimates (for some $p\ge \frac{2(n+2)}{n}$) was deduced under frequency localization, which will cause trouble when we want to sum the frequency pieces together, since there is no extra decay. Herr, Tataru and Tzvetkov \cite{HTT11} overcame this difficulty by applying a trilinear Strichartz estimate, which will give extra decay when the frequencies separat away from each other. Their argument also relies on $U^p,V^p$ based critical function spaces, which is a effective substitution of Bourgain spaces for critical problems, see \cite{KoTa05,KoTa07,HHK09,HTT10}.
\end{remark}

The main purpose of this paper is to generalize Herr, Tataru and Tzvetkov's result, which only consider \eqref{eq:nls} with $n=3$ and $k=2$, to more general setting \eqref{eq:nls}. And the main results are,

\begin{theorem}[Main Results]\label{main} For the initial value problem \eqref{eq:nls}, we have
\begin{itemize}
\item If $n= 1$ and $k\ge 3$, \eqref{eq:nls} is local well-posed for $\phi \in H^s(\T)$, provided $s\ge s_{1,k}$.
\item If $n= 2, 3, 4$ and $k\ge 2$, \eqref{eq:nls} is local well-posed for $\phi \in H^s(\T^n)$, provided $s\ge s_{n,k}$.
\item If $n\ge5$ and $k\ge 1$, \eqref{eq:nls} is local well-posed for $\phi \in H^s(\T^n)$, provided $s\ge s_{n,k}$.
\end{itemize}
Additionally, if $s_{n,k} \le 1$, then the \eqref{eq:nls} is globally well-posed for sufficiently small initial data $\phi \in H^1(\T^n)$.
\end{theorem}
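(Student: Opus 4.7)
The strategy is to follow the Herr--Tataru--Tzvetkov scheme at the level of $(2k+1)$-linear estimates. First I would set up the function spaces $X^s(I)$ and $Y^s(I)$ adapted to the linear Schr\"odinger flow on $\T^n$, built atomically from $U^2_\Delta$ and $V^2_\Delta$; these replace the Bourgain $X^{s,b}$ spaces at the critical endpoint. The local well-posedness statement then reduces, via Duhamel's formula, to showing that
\[
\mathcal{N}(u) = e^{it\Delta}\phi \mp i \int_0^t e^{i(t-t')\Delta} \bigl(|u|^{2k}u\bigr)(t')\,dt'
\]
is a contraction on a small ball in $X^s(I)$ with $I=[0,T)$, $s=s_{n,k}$, and both $T$ and the radius determined by a high-frequency smallness condition on $\phi$ in $H^s$. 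The standard $U^2$--$V^2$ duality recasts this as a multilinear estimate
\[
\bigl|\langle u_1\,\overline{u_2}\cdots u_{2k+1},\, v\rangle_{L^2_{t,x}(I\times\T^n)}\bigr| \ls \|v\|_{Y^{-s}(I)} \prod_{j=1}^{2k+1}\|u_j\|_{X^s(I)}.
\]

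The heart of the matter, and the main obstacle, is a frequency-localized $(2k+1)$-linear Strichartz estimate on $\T^n$. Paraproduct-decompose each factor dyadically as $u_j = \sum_{N_j} P_{N_j} u_j$; for $N_1 \ge N_2 \ge \cdots \ge N_{2k+1}$ I aim to establish
\[
\Bigl\|\prod_{j=1}^{2k+1} P_{N_j} e^{it\Delta}\phi_j\Bigr\|_{L^2(I\times\T^n)} \ls N_1^{s_{n,k}} \Bigl(\frac{N_{2k+1}}{N_1}\Bigr)^{\delta} \prod_{j=1}^{2k+1}\|P_{N_j}\phi_j\|_{L^2}
\]
with some $\delta>0$. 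The gain $(N_{2k+1}/N_1)^\delta$ is precisely what closes the dyadic summation at the critical regularity: pairing the top frequency against any other factor produces geometric series that can be resummed by Schur's test, whereas the sharp Strichartz estimates used in \cite{B93a} lack this slack. On the torus the proof reduces, after Plancherel, to lattice-point counts for near-intersections of dyadic pieces of the paraboloid $\tau=|\xi|^2$, where Bourgain's divisor/major-arc arithmetic and the refinement in \cite{HTT11} enter. The three splits $n=1$, $n\in\{2,3,4\}$, $n\ge 5$ reflect how the available Strichartz exponents and number-theoretic inputs degrade with dimension and must be handled separately.

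Once the multilinear Strichartz estimate is in hand, the transfer principle lifts it from free solutions to $U^2_\Delta$-atoms and hence to functions in $X^s$; the contraction-mapping argument then closes on a small ball around $e^{it\Delta}\phi$, yielding local well-posedness together with Lipschitz dependence by applying the same estimate to differences of two solutions. Finally, for the small-data global theorem under $s_{n,k}\le 1$, I combine the local theory at the critical regularity with the conservation laws \eqref{Energy}--\eqref{Mass}: if $\|\phi\|_{H^1}$ is sufficiently small, Sobolev embedding $H^1\hookrightarrow H^{s_{n,k}}$ keeps the critical norm of $u(t)$ small on every local interval, so the local existence time can be chosen uniformly and the solution iterated to all times, exactly as in \cite{HTT11}.
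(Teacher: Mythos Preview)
Your overall architecture matches the paper's: critical $U^2/V^2$-based spaces $X^s,Y^s$, duality reduction of the Duhamel term, a frequency-localized multilinear Strichartz estimate with a high/low decay factor, transfer to atoms, contraction, and globalization via conservation when $s_{n,k}\le 1$. Two points deserve correction, however.

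First, the paper does \emph{not} prove a $(2k+1)$-linear $L^2$ Strichartz estimate. It proves a $(k+1)$-linear one (Theorem~\ref{main_th}),
\[
\Big\|\prod_{j=1}^{k+1}P_{N_j}e^{it\Delta}\phi_j\Big\|_{L^2}\ \lesssim\ \Big(\tfrac{N_{k+1}}{N_1}+\tfrac{1}{N_2}\Big)^{\delta}\|P_{N_1}\phi_1\|_{L^2}\prod_{j=2}^{k+1}N_j^{s_{n,k}}\|P_{N_j}\phi_j\|_{L^2},
\]
and then handles the $(2k+2)$-fold pairing $\int \tilde u_0\cdots\tilde u_{2k+1}$ by Cauchy--Schwarz into two $(k+1)$-factor $L^2$ blocks. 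Your displayed $(2k+1)$-linear bound is also mis-scaled: the prefactor should be $\prod_{j\ge 2}N_j^{s_{n,k}}$, not $N_1^{s_{n,k}}$, and the decay $(N_{2k+1}/N_1)^\delta$ alone does not obviously sum the intermediate dyadic scales; the paper's extra $1/N_2$ term (and having \emph{two} decay factors after Cauchy--Schwarz) is what makes the summation in Proposition~\ref{prop:nonlinear} close without logarithmic losses.

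Second, and more substantively, the mechanism for the decay is not ``lattice-point counts for near-intersections of the paraboloid'' or divisor/major-arc arithmetic applied directly to the multilinear object. The paper instead (i) restricts the top frequency to a cube $C\in\mathcal C_{N_2}$ by orthogonality, (ii) partitions $C$ into strips $R_l\in\mathcal R_M(N_2)$ of width $M=\max\{N_2^2/N_1,1\}$ orthogonal to the cube center, (iii) shows the resulting pieces are almost orthogonal in time-frequency, and (iv) applies H\"older with exponents $p,q,p_{n,k}=(n+2)k$ together with Bourgain's \emph{linear} $L^p$ Strichartz (Lemma~\ref{prop:str}) and its strip-refined version (Lemma~\ref{local_str}, obtained by interpolating with Bernstein). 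All the number theory is already encapsulated in the linear Strichartz inputs; the multilinear gain is purely the geometric almost-orthogonality of the strip decomposition. This is also why the admissibility ranges in \eqref{admis} drive the case split \eqref{admis1}: one needs room to perturb $p$ around the endpoint so that $-n+\tfrac{2(n+2)}{p}+s_{n,k}>0$, which fails precisely for $(n,k)=(4,1)$.
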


\begin{remark}
For initial value problem \eqref{eq:nls}, this theorem extend Bourgain's subcritical results to critical spaces, and also generalize Herr, Tataru and Tzvetkov's result which only consider 3-dimension case with quintic nonlinearity. The condition $s_{n,k}\le 1$ implies that $n=1,2$ for all $k$, or $n=3$ for $k=2$. 
\end{remark}

\begin{remark}
Our main new tool is a multi-linear Stricartz estimate for solutions to linear Schr\"odinger equation, which ensure a decay when the high-low frequencies separate away from each other. We also need the critical function spaces based on $U^p$ and $V^p$ to make sure there is no loss in the inhomogeneous estimate. The multi-linear Stricartz estimates is the generalization of the trilinear ones used by Herr, Tataru and Tzvetkov \cite{HTT11}, on which our approach heavily relay.
\end{remark}

\begin{remark}
Our results couldn't cover the case for $n=4$ and $k=1$, which is the energy critical cubic Schr\"odinger equation
\begin{equation}
	\label{eq:nls2}
	\begin{cases}
		i u_t - \Delta u  = \pm |u|^{2} u  \\
		u|_{t= 0} = \phi(x),
	\end{cases}~(x, t)  \in \T^4 \times \R.
\end{equation}
Since in dimension $n=4$ the $L^4$ Strichartz estimate is the endpoint case, see \cite[Proposition 3.6]{B93a}. But our argument relay on a perturbation of $L^p$ with the index $p$ around 4, see for example \cite[Proposition 3.5]{HTT11}, where they abandon $L^6$ but adopt $L^p$ and $L^q$ with $p<6<q$. Such argument is crucial to gain decay between high-low frequencies. 
\end{remark}

The outline of the paper is as follows: In Section \ref{pre} we give some notations and preliminary lemmas. In Section \ref{str} we prove multi-linear Strichartz type estimates. In Section \ref{main_non} we show the main nonlinear estimates, which will imply our main results.


\section{Preliminaries}\label{pre}
\noindent
In this section we will give some notations and preliminary lemmas that will be used in this paper.

We write $A\lesssim B$ to indicate that there is a constant $C>0$ such that $A\le C B$, and we denote $A\thickapprox B$ when $A\lesssim B\lesssim A$. Define the spatial Fourier coefficients
\[
\widehat{f}(\xi):=c_1\int_{ \T^{n}} e^{-ix\cdot \xi}
f(x) \;dx,\;\quad \xi \in \Z^{n},
\]
and the space time Fourier transform
\[
\mathcal{F}u(\tau,\xi):= c_2 \int_{ \R\times \T^{n}}
e^{-i(x\cdot \xi+t\tau)} u(t,x) \;dt dx,\; (\tau,\xi) \in
\R\times\Z^{n},
\]
where $c_1,c_2$ are explicit constant.

Let $\psi\in C_0^\infty((-2,2))$ be a non-negative, even function satisfies
$\psi(s)=1$ for $|s|\leq 1$. Let $N\in 2^{\N}$ be a dyadic number and define
\[
\psi_1(\xi)=\psi(|\xi|); \quad  \psi_N(\xi)=\psi_1(N^{-1} \xi)- \psi_1(2N^{-1}\xi), \quad \text{for } N\geq 4.
\]
For $f\in L^2(\T^n)$, we define Littlewood-Paley projection operator
\[
\widehat{P_N f}(\xi) = \psi_N (\xi) \widehat f(\xi),
\]
and define $P_{\leq N}:=\sum_{1\leq M\leq N}P_M$. More generally, for a set $S\subset \Z^{n}$ and $\chi_S$ denoting the characteristic function of $S$, we define the Fourier projection operator
\[
\widehat{P_S f}(\xi) = \chi_S (\xi) \widehat f(\xi).
\]
Let $s \in \R$, we define the Sobolev space $H^s(\T^n)$ by the norm
\[
\|f\|_{H^s(\T^n)}:= \left(\sum_{N \geq 1}N^{2s} \|P_N
  f\|_{L^2(\T^n)}^2\right)^{\frac12}.
\]

\subsection*{The function spaces} The $U^p$ and $V^p$ spaces will be used to construct our main function spaces, but we only recall some of their properties and refer the reader to \cite[Section 2]{HHK09} for detailed definitions and proofs.

For $s \in \R$ we let $U^p_\Delta H^s $ and $V^p_\Delta H^s$ be
the spaces with norms
\begin{equation}\label{eq:delta_norm}
\| u\|_{U^p_\Delta H^s} = \| e^{-it \Delta} u\|_{U^p(\R,H^s)},
    \qquad
\| u\|_{V^p_\Delta H^s} = \| e^{-it \Delta} u\|_{V^p(\R,H^s)}.
\end{equation}
And the following continuous embedding (see \cite[Proposition 2.2]{HHK09}) will be used in this paper
\begin{equation}\label{embed}
 U^p_\Delta H^s \hookrightarrow U^q_\Delta H^s \hookrightarrow L^{\infty}(\R;H^s), \quad \text{ for all } 1\le p<q<\infty.
\end{equation}
Spaces of such type have been successfully used as substitutions for
$X^{s,b}$ spaces which are still effective at critical scaling, see
for instance \cite{KoTa05,KoTa07,HHK09,HTT10,HTT11}. Now we are ready to define the main resolution spaces. Let $X^{s}$ be the space defined by the norm
\begin{equation}\label{X}
 \|u\|_{X^{s}}:=\left(\sum_{\xi \in \Z^n}\lb \xi\rb ^{2s}
 \|e^{it|\xi|^2}\widehat{u(t)}(\xi)\|_{U^2_t}^2\right)^{\frac12}
\end{equation}
and $Y^{s}$ be the space defined by the norm
\begin{equation}\label{Y}
 \|u\|_{Y^{s}} :=\left(\sum_{\xi\in \Z^n}
 \lb\xi\rb ^{2s} \|e^{it|\xi|^2}\widehat{u(t)}(\xi)\|_{V^2_t}^2\right)^{\frac12}.
\end{equation}
Also we can define the spaces $X(I)$ and $Y(I)$ to be the restriction of the original spaces on the time interval $I\subset \R$. The following continuous embeddings hold (see c.f. \cite{HTT11}):
  \begin{align}\label{embedding}
  U^2_\Delta H^s \hookrightarrow X^s \hookrightarrow Y^s
  \hookrightarrow V^2_\Delta H^s.
  \end{align}
Let $\Z^n = \cup C_k$ be a partition of $\Z^n$, then
  \begin{equation}
    \left(\sum_k \| P_{C_k} u\|_{V^2_\Delta H^s}^2\right)^{\frac12} \lesssim  \| u\|_{Y^s}.
  \end{equation}
We will use a interpolation property of the spaces $U^p$ and $V^p$, reader could find the linear and trilinear version in \cite[Proposition 2.20]{HHK09} and \cite[Lemma 2.4]{HTT11}, the proof for the following multi-linear version is similar.
\begin{lemma}\label{interpolation}
  Let $q_1,\cdots,q_k>2$ and
  \[
    T:U^{q_1}\times \cdots\times U^{q_{k}}\to E
  \] 
  be a bounded, multi-linear operator satisfy
  \begin{align*}
    \|T(u_1,\cdots,u_k)\|_{L^2} \leq &C_1 \prod_{j=1}^k\|u_j\|_{U^{q_j}}\\
    \|T(u_1,\cdots,u_3)\|_{L^2} \leq &C_2 \prod_{j=1}^k\|u_j\|_{U^2},
  \end{align*}
with $0<C_2<C_1$. Then we have
  \[
    \|T(u_1,\cdots,u_3)\|_{L^2} \ls C_2 \Big(\ln\frac{C_1}{C_2}+1\Big)^k\prod_{j=1}^k\|u_j\|_{V^2}.
  \]
\end{lemma}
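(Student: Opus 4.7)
The plan is to generalize the proofs in \cite[Proposition 2.20]{HHK09} (linear case) and \cite[Lemma 2.4]{HTT11} (trilinear case). The basic idea is to decompose each argument $u_j \in V^2$ into a piece with controlled $U^2$ norm and a piece with small $U^{q_j}$ norm, then distribute $T$ multi-linearly over the resulting $2^k$ terms and balance the parameters.

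The main technical input is the following decomposition, which is exactly what drives the linear case: for every $u \in V^2$, every $q>2$, and every $M>0$, there exists a decomposition $u = u^{\flat} + u^{\sharp}$ such that
\[
\|u^{\flat}\|_{U^2} \lesssim M, \qquad \|u^{\sharp}\|_{U^{q}} \lesssim e^{-cM/\|u\|_{V^2}}\|u\|_{V^2},
\]
or equivalently, given a target parameter, a similar tradeoff with one norm polynomial and the other exponentially small. This is obtained from the atomic structure of $U^2$ together with the embedding $V^2 \hookrightarrow U^{q}$ for $q>2$, by peeling off the large-jump part of $u$ as a $U^2$ atomic sum and controlling the remainder in $U^{q}$.

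First, I would rescale so that $\|u_j\|_{V^2} = 1$ for each $j$; by homogeneity this is harmless. Applying the decomposition to each $u_j$ with a common parameter $M>0$ gives $u_j = u_j^{\flat} + u_j^{\sharp}$. Expanding multi-linearly,
\[
T(u_1,\ldots,u_k) = \sum_{S\subseteq\{1,\ldots,k\}} T(\epsilon_1^S,\ldots,\epsilon_k^S),
\]
where $\epsilon_j^S = u_j^{\sharp}$ if $j\in S$ and $\epsilon_j^S = u_j^{\flat}$ otherwise. For the term $S=\emptyset$ I apply the $U^2$ bound, obtaining a contribution $\lesssim C_2 M^k$. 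For every other term, having at least one $\epsilon_j^S = u_j^{\sharp}$, I apply the $U^{q_j}$ bound, using the embedding $U^2\hookrightarrow U^{q_j}$ to control the $\flat$-factors in $U^{q_j}$ by their $U^2$ norms; this gives a contribution $\lesssim C_1 M^{k-|S|} e^{-c|S|M}$ for each nonempty $S$.

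Choosing $M = c^{-1}\ln(C_1/C_2) + 1$ balances the two regimes: the exponential factor in the $S\neq\emptyset$ terms becomes $\lesssim (C_2/C_1)^{|S|}$, so each such term is bounded by $C_2 M^{k-|S|}(C_2/C_1)^{|S|-1} \lesssim C_2 M^{k}$, and summing over the $2^k-1$ nonempty subsets absorbs into the final bound $C_2(\ln(C_1/C_2)+1)^k$. The principal obstacle is bookkeeping: one must verify that the $\flat$-pieces behave well enough in the higher-exponent $U^{q_j}$ spaces (via the $U^2\hookrightarrow U^{q_j}$ embedding and the uniform $M$-control) so that the $2^k - 1$ mixed terms can all be handled using only the two hypothesized bounds, without introducing intermediate multi-linear estimates. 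Once this is set up, the combinatorial sum is routine and the optimization in $M$ is identical to the linear and trilinear proofs.
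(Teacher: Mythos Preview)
Your proposal is correct and is exactly the approach indicated by the paper: the paper does not give a detailed proof but simply remarks that the argument is ``similar'' to the linear version \cite[Proposition~2.20]{HHK09} and the trilinear version \cite[Lemma~2.4]{HTT11}, and what you have written is precisely the natural $k$-linear extension of those arguments, with the same $V^2$-decomposition, the same $2^k$-term expansion, and the same optimization $M\approx \ln(C_1/C_2)$.
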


The following two linear estimates could be found in \cite[Proposition 2.9 and 2.10]{HTT11}.
\begin{lemma}\label{linear}
  Let $s\geq 0$, $0<T\leq \infty$ and $\phi \in H^s(\T^n)$. Then
  \begin{equation}\label{eq:linear}
    \|e^{it\Delta}\phi\|_{X^s([0,T))}\leq \|\phi\|_{H^s}.
  \end{equation}
\end{lemma}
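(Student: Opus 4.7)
The plan is simply to unwind the definitions and use the fact that a constant-in-time function has a small $U^2$ norm equal to its absolute value.

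First I would compute the spatial Fourier coefficients of the free evolution: for $u(t) = e^{it\Delta}\phi$, one has $\widehat{u(t)}(\xi) = e^{-it|\xi|^2}\widehat{\phi}(\xi)$. Consequently the twisted profile inside the $X^s$ norm,
\[
e^{it|\xi|^2}\widehat{u(t)}(\xi) = \widehat{\phi}(\xi),
\]
is independent of $t$.

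The key observation is then that for any constant $c \in \C$, viewed as a function on the interval $[0,T)$, the $U^2_t$ norm satisfies $\|c\|_{U^2([0,T))} \leq |c|$. This is a basic structural property of $U^p$: the function $c\,\chi_{[0,T)}$ is (an extension of) a single $U^2$-atom with two steps (of values $c$ and $0$), hence its $U^2(\R)$ norm is bounded by $|c|$, and restriction only decreases the norm. I would invoke this directly from \cite{HHK09} rather than re-deriving it.

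Plugging this in for $c = \widehat\phi(\xi)$ at each frequency $\xi$, I would conclude
\[
\|e^{it\Delta}\phi\|_{X^s([0,T))}^2 = \sum_{\xi \in \Z^n}\langle\xi\rangle^{2s}\|\widehat{\phi}(\xi)\|_{U^2_t([0,T))}^2 \leq \sum_{\xi \in \Z^n}\langle\xi\rangle^{2s}|\widehat{\phi}(\xi)|^2 = \|\phi\|_{H^s}^2,
\]
which gives the desired bound after taking square roots. There is no real obstacle here; the whole content of the lemma lies in the compatibility built into the definition of $X^s$ (the $e^{it|\xi|^2}$ twist) and in the elementary estimate for the $U^2$ norm of a step function.
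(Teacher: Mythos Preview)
Your argument is correct and is precisely the standard one: the twist in the definition of $X^s$ makes the free evolution constant at each frequency, and a single-step function has $U^2$ norm equal to the size of its value. The paper does not give its own proof of this lemma but simply cites \cite[Proposition~2.9]{HTT11}, where exactly this computation is carried out; so your proposal matches the intended argument.
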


Let $f\in L^1_{loc}([0,\infty);L^2(\T^n))$ and define
\begin{equation}\label{eq:duhamel}
  \mathcal{I}(f)(t):=\int_{0}^t e^{i(t-s)\Delta} f(s) ds,
\end{equation}
for $t \geq 0$ and $\mathcal{I}(f)(t)=0$ otherwise. We have the following linear estimate for the Duhamel term.
\begin{lemma}\label{inhom}
  Let $s \geq 0$ and $T>0$. For $f \in L^1([0,T);H^s(\T^n))$ we have $\mathcal{I}(f)\in X^s([0,T))$ and
  \begin{equation}\label{eq:inhom_est}
    \|\mathcal{I}(f)\|_{X^s([0,T))} \leq \sup_{v \in Y^{-s}([0,T)): \|v\|_{Y^{-s}}=1}
    \left|\int_0^T \int_{\T^3}f(t,x)\overline{v(t,x)} dx dt\right|.
  \end{equation}
\end{lemma}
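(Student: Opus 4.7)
The proof reduces to the $U^2$-$V^2$ duality of \cite{HHK09} applied at each spatial Fourier mode, combined with the $\ell^2_\xi$ structure visible in \eqref{X} and \eqref{Y}. A convenient extension to $\R$ is built in: for $t\geq T$ we have $\mathcal{I}(f)(t)=e^{i(t-T)\Delta}\mathcal{I}(f)(T)$ automatically, since $f$ vanishes on $[T,\infty)$, so the $X^s$ norm of this extension dominates the restriction norm $\|\mathcal{I}(f)\|_{X^s([0,T))}$. For each $\xi\in\Z^n$ the function
\[
G_\xi(t):=e^{it|\xi|^2}\widehat{\mathcal{I}(f)}(\xi,t)
\]
is then absolutely continuous on $\R$, vanishes for $t\leq 0$, is constant for $t\geq T$, and on $[0,T)$ satisfies $G_\xi'(t)=e^{it|\xi|^2}\hat f(\xi,t)$.

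Next I would invoke the $U^2$-$V^2$ duality pairing (see \cite[Prop.~2.10]{HHK09} or the analogous identification used in \cite{HTT11}), which at each fixed $\xi$ yields
\[
\|G_\xi\|_{U^2_t} \leq \sup_{\|w\|_{V^2_t}\leq 1}\left|\int_0^T e^{it|\xi|^2}\hat f(\xi,t)\,\overline{w(t)}\,dt\right|.
\]
For any $\varepsilon>0$ (and, if needed, after reducing to finitely many Fourier modes by density), I would pick $w_\xi\in V^2_t$ of norm at most one realising this supremum within $\varepsilon$, and select weights $b_\xi$ proportional to $\langle\xi\rangle^{2s}\|G_\xi\|_{U^2_t}$, normalised so that $\sum_\xi\langle\xi\rangle^{-2s}|b_\xi|^2=1$. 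Defining a test function $v$ on $[0,T)\times\T^n$ through its spatial Fourier coefficients by $\hat v(\xi,t):=\overline{b_\xi}\,e^{-it|\xi|^2}w_\xi(t)$, one verifies directly from \eqref{Y} that $\|v\|_{Y^{-s}([0,T))}\leq 1$, while Parseval in $\xi$ together with the defining choice of $b_\xi$ produces
\[
\int_0^T\int_{\T^n}f(t,x)\,\overline{v(t,x)}\,dx\,dt \geq \|\mathcal{I}(f)\|_{X^s([0,T))}-O(\varepsilon).
\]
Letting $\varepsilon\to 0$ and combining approximations completes the argument.

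The conceptual content is purely functional-analytic once the $U^2$-$V^2$ duality is in hand, so I do not anticipate a genuine analytic obstacle. The main pitfall is instead bookkeeping: one has to track the complex conjugates and the phase factors $e^{\pm it|\xi|^2}$ so that the mode-by-mode choices $w_\xi$ assemble into a single legitimate $Y^{-s}$-function, and one has to confirm that the $U^2$-$V^2$ pairing delivers the sharp constant $1$ required by \eqref{eq:inhom_est} rather than some unspecified universal constant. The distribution of the accumulated $\varepsilon$-errors across infinitely many Fourier modes, handled by a weighted density reduction, is the only remaining technicality.
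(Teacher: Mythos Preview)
The paper does not give its own proof of this lemma; it simply cites \cite[Propositions~2.9 and~2.10]{HTT11}. Your sketch is precisely the argument that appears there---mode-by-mode $U^2$--$V^2$ duality from \cite{HHK09} (which does yield the sharp constant~$1$, since for $G_\xi$ absolutely continuous the abstract Stieltjes pairing reduces to $\int G_\xi'\,\overline{w}$), followed by $\ell^2_\xi$ duality to assemble the test function $v\in Y^{-s}$---and is correct.
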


\section{Strichartz estimates}\label{str}
In this section, we will use Bourgain's $L^p$ Stricartz estimates to prove a multilinear Strichartz estimate, which is the main new tool in this paper. The key observation is that the period Stichartz estimates only related to the measure of the frequency supports, see \cite{B93a,HTT11} for more detailed discussion.

\begin{definition}
We say that $(n,p)\in \N\times \R$ is admissible pairs if
  \begin{align}\label{admis}
    n=&1, \quad \, p>6;\nonumber\\
    n=&2, 3, \quad \, p>4;\\
    n\ge& 4,\quad\, p\ge\frac{2(n+4)}{n}\nonumber.
  \end{align}
We will only use Strichartz estimates with $(n,p)$ in this range, since in this range the following Strichartz estimates are sharp.
\end{definition}
We first recall Bourgain's fundamental Strichartz estimates. Let $N$ be a dyadic number, and $\mathcal{C}_N$ denote the collection of cubes $C\subset \Z^n$ of side-length $N\geq 1$ with arbitrary center and orientation.
\begin{lemma}[Bourgain \cite{B93a}]\label{prop:str}For all $N\geq 1$ and admissible pairs $(n,p)$, we have
  \begin{align}\label{bourgain_Str}
    \|P_N e^{it \Delta}\phi\|_{L^p(\T\times \T^n)}
    \ls N^{\frac{n}2-\frac{n+2}p} \|P_N \phi\|_{L^2(\T^n)}.
  \end{align}
More generally, for all $C \in \mathcal{C}_N$ and admissible pairs $(n,p)$ we have
  \begin{equation}\label{bourgain_Str_cubes}
    \|P_C e^{it \Delta}\phi\|_{L^p(\T\times \T^n)}
    \ls N^{\frac{n}2-\frac{n+2}p} \|P_C \phi\|_{L^2(\T^n)}.
  \end{equation}
\end{lemma}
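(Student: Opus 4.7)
The plan is to deduce \eqref{bourgain_Str_cubes} from \eqref{bourgain_Str} via a Galilean change of variables, so the main work is the frequency-localized estimate \eqref{bourgain_Str}. Expanding
\begin{equation*}
P_N e^{it\Delta}\phi(t,x) = \sum_{\xi\in\Z^n}\psi_N(\xi)\widehat{\phi}(\xi)\,e^{i(x\cdot\xi - t|\xi|^2)}
\end{equation*}
and using Plancherel to identify $\|P_N\phi\|_{L^2}\approx \|a_\xi\|_{\ell^2}$ with $a_\xi=\psi_N(\xi)\widehat\phi(\xi)$, the desired bound is equivalent to the Weyl-type exponential-sum estimate
\begin{equation*}
\Bigl\|\sum_{\xi\in B_N} a_\xi\, e^{i(x\cdot\xi - t|\xi|^2)}\Bigr\|_{L^p(\T\times\T^n)} \ls N^{\frac{n}{2}-\frac{n+2}{p}}\|a_\xi\|_{\ell^2_\xi},
\end{equation*}
where $B_N = \{\xi\in\Z^n:|\xi|\ls N\}$.

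I would first prove the exponential-sum estimate for an even integer $p=2r$. Raising both sides to the $p$th power and integrating in $(t,x)$ reduces the claim to counting the number of $2r$-tuples in $B_N$ obeying the resonance system $\sum_{j=1}^{r}\xi_j = \sum_{j=1}^{r}\xi_j'$ together with $\sum_{j=1}^{r}|\xi_j|^2 = \sum_{j=1}^{r}|\xi_j'|^2$. This count I would bound by the Hardy–Littlewood circle method: the Dirichlet approximation $t\approx a/q$ converts the Schr\"odinger kernel $\sum_{|\xi|\ls N}e^{i(x\cdot\xi-t|\xi|^2)}$ into a Gauss sum of modulus $\ls q^{-n/2}$ times a smoothed dispersive phase, and a major/minor arc decomposition combined with the stationary-phase decay of the continuous kernel produces the sharp power of $N$. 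In dimensions $n=1,2,3$ this step reduces to the classical divisor bound on representations of an integer as a sum of squares, which loses $N^\eps$ at the critical exponent and accounts for the open conditions $p>6$ ($n=1$) and $p>4$ ($n=2,3$); in $n\ge 4$ the endpoint $p=2(n+4)/n$ is already admissible. Non-integer $p$ in the admissible range are then recovered by Riesz–Thorin interpolation against the trivial $L^2$ isometry of $e^{it\Delta}$.

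For the cube version \eqref{bourgain_Str_cubes}, I would select a lattice point $\xi_0\in\Z^n$ near the center of $C$ and apply the Galilean transformation
\begin{equation*}
u(t,x)\longmapsto e^{-i(x\cdot\xi_0-t|\xi_0|^2)}\,u(t,x+2t\xi_0),
\end{equation*}
which preserves both the Schr\"odinger evolution and the $L^p(\T\times\T^n)$ norm while shifting the Fourier support of $P_C e^{it\Delta}\phi$ into a (possibly rotated) cube of side $N$ sitting near the origin. Enclosing this cube inside an axis-aligned box of side $\ls N$ then reduces the claim to \eqref{bourgain_Str} at scale $N$, up to a harmless constant.

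The decisive difficulty is the number-theoretic counting in the second step: the unavoidable $N^\eps$ loss at the critical even exponent is precisely what forces the open conditions in low dimensions, and attaining the endpoint $p=2(n+4)/n$ for $n\ge 4$ with no $N^\eps$ loss requires the full Hardy–Littlewood circle-method analysis together with sharp bounds on incomplete Gauss sums over the minor arcs.
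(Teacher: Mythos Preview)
The paper does not prove this lemma at all: it is stated with the attribution ``Bourgain \cite{B93a}'' and used as a black box, with only a remark afterward about sharpness and the fact that \eqref{bourgain_Str_cubes} depends only on the measure of the frequency support. So there is no proof in the paper to compare your argument against.

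Your sketch is a faithful outline of Bourgain's original argument in \cite{B93a}: the reduction of \eqref{bourgain_Str} at even integer $p=2r$ to counting solutions of the resonance system, the circle-method/divisor-bound input that produces the $N^\eps$ loss at the critical exponent in dimensions $n\le 3$ (hence the strict inequalities $p>6$ and $p>4$), the endpoint result for $n\ge 4$, and interpolation to cover the full admissible range. The Galilean reduction you give for \eqref{bourgain_Str_cubes} is the standard one and is correct in spirit; note only that the phase should be $e^{-i(x\cdot\xi_0 + t|\xi_0|^2)}$ rather than $e^{-i(x\cdot\xi_0 - t|\xi_0|^2)}$ for the transformed function to again be a free solution, and that $\xi_0\in\Z^n$ is needed so that the modulation is $2\pi$-periodic in $x$. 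Since cubes in $\mathcal{C}_N$ are allowed arbitrary orientation, your last step of enclosing the shifted cube in an axis-aligned cube of comparable side-length is indeed necessary and costs only a dimensional constant.

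In short: your proposal is correct as a proof outline, but it is supplying what the paper deliberately omits and attributes to \cite{B93a}.
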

\begin{remark}
Under the admissible condition \eqref{admis}, the Strichartz estimates \eqref{bourgain_Str} and \eqref{bourgain_Str_cubes} are sharp, which means that one could apply those estimates to get `almost' critical well-posedness (see c.f. Theorem \ref{Bourgain}) ignoring log divergence. The estimate \eqref{bourgain_Str_cubes} shows that it only depend on the measure of the frequency's support, not the position.
\end{remark}

In order to employ more orthogonality between different frequency pieces of Schr\"odnger evolution, we need further decompose the frequency piece. Let $\mathcal{R}_{M}(N)$ be the collection of all sets in $\Z^n$ which are given as the intersection of a cube of side length $2N$ with strips of width $2M$, i.e. the collection of all sets of the form
\[
  (\xi_0+[-N,N]^n)\cap \{ \xi \in \Z^n: |a\cdot \xi -A|\leq M \}
\]
with some $\xi_0 \in \Z^n, a \in \R^n$, $|a|=1$, $A \in \R$.  For all $1\leq M\leq N$ and $R \in \mathcal{R}_M (N)$, by Bernstein inequality we have
\begin{equation}\label{bernstein}
  \|P_R  e^{it \Delta}\phi\|_{L^\infty (\T\times \T^n)}
  \ls M^{\frac12}N^{\frac{n-1}{2}} \|P_R \phi\|_{L^2(\T^n)}.
\end{equation}
By H\"older's inequality with the estimates \eqref{bourgain_Str_cubes} and \eqref{bernstein} we get
\begin{lemma}\label{local_str}
For all $1\leq M\leq N$ and $R \in \mathcal{R}_M(N)$ we have
  \begin{equation}\label{bourgain_Str_impr}
    \|P_R e^{it \Delta}\phi\|_{L^p(\T\times \T^n)}
    \ls N^{\frac{n}2-\frac{n+2}p} \left(\frac{M}{N}\right)^{\delta}\|P_R \phi\|_{L^2(\T^n)},
  \end{equation}
where $(n,p)$ are admissible pairs defined in \eqref{admis}, and for $n=1$, $0<\delta<\frac12-\frac3p$; for $n=2,3$, $0<\delta<\frac12-\frac2p$; for $n\ge 4$, $0<\delta<\frac12- \frac{n+3}{np}$.
\end{lemma}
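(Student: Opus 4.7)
The plan is to interpolate the sharp Strichartz estimate \eqref{bourgain_Str_cubes} at some auxiliary admissible exponent $p_0<p$ against the Bernstein-type $L^\infty_{t,x}$ bound \eqref{bernstein}. The conceptual point is that in \eqref{bourgain_Str_cubes}, applied at the scale of the enclosing cube, the strip width $M$ is invisible; the $M$-dependence only enters through Bernstein. So by shifting a small part of the Hölder weight from $L^{p_0}$ to $L^\infty$ one trades a power of $N$ for the same power of $M$, which produces precisely the desired $(M/N)^\delta$ gain.

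Concretely, I would first observe that every $R\in\mathcal{R}_M(N)$ is contained in a cube $C$ of side length $2N$, and that $P_R\phi$ is Fourier-supported in $R\subset C$. Hence \eqref{bourgain_Str_cubes}, applied to $P_R\phi$ in place of $P_C\phi$, yields, for every admissible pair $(n,p_0)$,
\[
\|P_R e^{it\Delta}\phi\|_{L^{p_0}(\T\times\T^n)}\ls N^{\frac{n}{2}-\frac{n+2}{p_0}}\|P_R\phi\|_{L^2(\T^n)}.
\]
Combining this with \eqref{bernstein}, namely $\|P_R e^{it\Delta}\phi\|_{L^\infty_{t,x}}\ls M^{1/2}N^{(n-1)/2}\|P_R\phi\|_{L^2}$, and the joint Hölder interpolation
\[
\|u\|_{L^p}\le \|u\|_{L^{p_0}}^{p_0/p}\|u\|_{L^\infty}^{1-p_0/p},
\]
applied to $u=P_R e^{it\Delta}\phi$, a direct bookkeeping of exponents gives
\[
\|P_R e^{it\Delta}\phi\|_{L^p}\ls N^{\frac{n}{2}-\frac{n+2}{p}-\frac{1}{2}(1-p_0/p)}\,M^{\frac{1}{2}(1-p_0/p)}\,\|P_R\phi\|_{L^2},
\]
which is exactly \eqref{bourgain_Str_impr} with $\delta=\tfrac{1}{2}(1-p_0/p)$.

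To recover the stated ranges of $\delta$, I would let $p_0$ approach the lower endpoint of the admissibility region \eqref{admis} in each dimension: $p_0\downarrow 6$ for $n=1$, $p_0\downarrow 4$ for $n=2,3$, and $p_0$ at (or arbitrarily close to) the endpoint $\tfrac{2(n+4)}{n}$ for $n\ge 4$. A short arithmetic check then recovers the advertised strict upper bound on $\delta$ in each case, noting that one needs $p$ to lie strictly above the threshold so that an admissible $p_0<p$ exists at all.

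The only genuine ``obstacle'' here is exponent arithmetic and the verification that the three interpolation ranges are nonempty; there is no analytic difficulty beyond the two input estimates \eqref{bourgain_Str_cubes}, \eqref{bernstein} and a single application of Hölder's inequality, and the argument parallels the trilinear analogue used in \cite{HTT11}.
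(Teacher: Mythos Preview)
Your proposal is correct and follows exactly the paper's approach: the paper's entire proof is the single sentence ``By H\"older's inequality with the estimates \eqref{bourgain_Str_cubes} and \eqref{bernstein} we get,'' which is precisely your interpolation $\|u\|_{L^p}\le \|u\|_{L^{p_0}}^{p_0/p}\|u\|_{L^\infty}^{1-p_0/p}$ between the cube Strichartz bound and the strip Bernstein bound. Your exponent arithmetic $\delta=\tfrac12(1-p_0/p)$ and the recovery of the $\delta$-ranges by pushing $p_0$ to the admissibility threshold are exactly what the one-line proof leaves implicit (your computation in fact yields the slightly tighter bound $\delta\le \tfrac12-\tfrac{n+4}{np}$ for $n\ge4$, which is all that is used downstream).
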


We conclude this section with our key multi-linear Strichartz estimates:

\begin{theorem}[Multi-linear Strichartz estimate]\label{main_th}
Assume $N_1\ge N_2\ge \cdots \ge N_{k+1} \ge 1$, $u_j =e^{it\Delta}\phi_j$ and
  \begin{align}\label{admis1}
    n=1,\, k\ge3; \quad
    n=2, 3,4, \, k\ge2; \quad
    n\ge 5,\, k\ge 1.
  \end{align}
Then there exist $\delta'>0$, such that
  \begin{equation}\label{eq:multi}
    \begin{split}
     \Big\|\prod_{j=1}^{k+1}P_{N_j} u_j\Big\|_{L^2(\T\times\T^n)} \ls& \left(\frac{N_{k+1}}{N_1}+\frac1{N_2}\right)^{\delta'}\|P_{N_1}\phi_1\|_{L^2}\\
     &\prod_{j=2}^{k+1}N^{s_{n,k}}_j \|P_{N_j}\phi_j\|_{L^2}.
    \end{split}
  \end{equation}
\end{theorem}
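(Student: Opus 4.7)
The plan is to adapt the trilinear strategy of Herr--Tataru--Tzvetkov (which covers $n=3,\,k=2$) to the general $(k+1)$-linear setting. The three tools I will use are Bourgain's sharp Strichartz on frequency cubes (Lemma \ref{prop:str}), its refinement on strips (Lemma \ref{local_str}), which supplies the crucial gain $(M/N)^\delta$, and almost-orthogonality in Fourier space obtained by slicing the highest-frequency factor $P_{N_1}u_1$ into thin strips.

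First I would pick a scale $M$ with $1\le M\le N_1$ to be optimized, together with a direction $a\in\R^n$, and decompose
\[
P_{N_1} u_1=\sum_\ell P_{R_\ell} u_1,\qquad R_\ell\in\mathcal{R}_M(N_1),
\]
where the $R_\ell$ are disjoint strips of thickness $M$ along $a$. The Fourier support of $\prod_{j=2}^{k+1}P_{N_j}u_j$ lies in a cube of side $\lesssim N_2$, so the Fourier support of $P_{R_\ell} u_1\cdot\prod_{j\ge 2}P_{N_j}u_j$ is contained in the fattened strip $R_\ell+O(N_2)$. A resonance/transversality analysis on the space-time paraboloid $\tau=-|\xi|^2$ (generalizing \cite[Lem.~3.4]{HTT11} to $k+1$ factors) shows that these fattened strips cover each space-time frequency with $O(1)$ multiplicity, yielding the almost-orthogonality
\[
\Big\|\prod_{j=1}^{k+1} P_{N_j} u_j\Big\|_{L^2}^2\lesssim \sum_\ell \Big\| P_{R_\ell} u_1 \prod_{j=2}^{k+1} P_{N_j} u_j\Big\|_{L^2}^2.
\]

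For each $\ell$ I would then apply H\"older's inequality with exponents $(p_1,q,\ldots,q)$ satisfying $1/p_1+k/q=1/2$, chosen just above the critical pair $p_1=2(n+2)/n$, $q=k(n+2)$. The hypothesis \eqref{admis1} on $(n,k)$ is exactly what is needed to place both $(n,p_1)$ and $(n,q)$ inside the admissible range \eqref{admis}. Apply Lemma \ref{local_str} to $P_{R_\ell}u_1$ to extract the gain $(M/N_1)^\delta$, and Bourgain's Strichartz (Lemma \ref{prop:str}) to each $P_{N_j}u_j$ with $j\ge2$. Summing via Cauchy--Schwarz in $\ell$ and using $\sum_\ell\|P_{R_\ell}\phi_1\|_{L^2}^2=\|P_{N_1}\phi_1\|_{L^2}^2$, then optimizing $M\approx\max(N_{k+1},\,N_1/N_2)$ so that $(M/N_1)^\delta=(N_{k+1}/N_1+1/N_2)^\delta$, produces the claimed estimate. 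The small losses $N_j^{O(\varepsilon)}$ incurred by taking the H\"older exponents strictly above critical are absorbed by slightly shrinking $\delta$ to a final $\delta'>0$.

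The main obstacle is the almost-orthogonality step in the regime $M<N_2$: a naive overlap count for the fattened strips gives multiplicity $\lesssim N_2/M$, which would cancel the anticipated gain. The fix is to exploit the Schr\"odinger dispersion relation, namely that each $u_j=e^{it\Delta}\phi_j$ has its space-time Fourier transform supported on the paraboloid $\tau=-|\xi|^2$; after a suitable choice of the slicing direction $a$, the resulting resonance set is transverse to $a$, so that only $O(1)$ strips contribute at each space-time frequency. Executing this transversality argument for arbitrary multilinearity $k+1$, rather than just $3$ as in \cite{HTT11}, is the technical heart of the proof; the remainder is routine H\"older/Strichartz bookkeeping together with the optimization of $M$.
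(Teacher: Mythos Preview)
Your outline misses the single most important reduction in the argument: before slicing into strips, you must first decompose the top factor by spatial orthogonality into cubes $C\in\mathcal{C}_{N_2}$ of side $N_2$, i.e.\ replace $P_{N_1}u_1$ by $P_C P_{N_1}u_1$. This is what the paper (and \cite{HTT11}) actually does, and it is not optional. Two things go wrong without it. First, the strip estimate of Lemma~\ref{local_str} applied at scale $N_1$ produces a factor $N_1^{\frac n2-\frac{n+2}{p_1}}$; for $n\ge 3$ the admissibility condition \eqref{admis} forces $p_1>4$ (or $p_1\ge 2(n+4)/n$), so this exponent is a fixed positive number, not an $\varepsilon$ you can shrink, and there is nothing on the right-hand side of \eqref{eq:multi} to absorb a genuine power of $N_1$. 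Localizing to $C$ replaces this by $N_2^{\frac n2-\frac{n+2}{p}}$, which then cancels against the other factors. Second, the almost-orthogonality you invoke is a \emph{time-frequency} computation that only works once the spatial frequency is pinned to a small cube: for $\xi\in C$ one has $|\xi|^2=2\xi_0\cdot\xi+\text{const}+O(N_2^2)$, so slicing in the single direction $a=\xi_0/|\xi_0|$ with width $M=\max\{N_2^2/N_1,1\}$ makes the time-frequency windows of the products essentially disjoint. Over the full annulus $|\xi|\sim N_1$ there is no single direction $a$ with this property; your "transversality" paragraph promises such an argument but does not supply one, and \cite[Lemma~3.4]{HTT11} does not either---it already works after the cube reduction.

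There is also a secondary problem with your H\"older scheme. Taking $p_1$ ``just above $2(n+2)/n$'' is simply inadmissible for $n\ge 3$ by \eqref{admis}; and taking all remaining exponents equal to a single $q$ forces the entire gain to come from the strip factor $(M/N_1)^\delta$, which then has to do double duty producing both $1/N_2$ and $N_{k+1}/N_1$. In the actual proof these two pieces of the gain have different origins: the $1/N_2$ (really $N_2/N_1$) comes from the strip width $M=N_2^2/N_1$ via $(M/N_2)^\delta$, while the $N_{k+1}/N_2$ comes from an \emph{asymmetric} H\"older split---using a slightly subcritical $p$ on the two largest factors and a correspondingly supercritical $q$ on the smallest factor $P_{N_{k+1}}u_{k+1}$, with the middle factors at the scale-invariant $p_{n,k}=(n+2)k$. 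The strip width $M$ is not a free optimization parameter; it is dictated by the almost-orthogonality.
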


\begin{remark}
Theorem \ref{main_th} is the main new tool in this paper, the proof base on the idea from \cite{HTT11}, where the trilinear Strichartz estimate is studied. The condition \eqref{admis1} is also the condition for the Theorem \ref{main}, which comes from the admissible condition \eqref{admis}.
\end{remark}

\begin{proof}
By orthogonality, it suffices to prove
  \begin{equation*}
    \begin{split}
      \Big\|P_CP_{N_1}u_1\prod_{j=2}^{k+1}P_{N_j} u_j\Big\|_{L^2(\T\times\T^n)} \ls{}& \left(\frac{N_{k+1}}{N_1}+\frac1{N_2}\right)^{\delta'}\|P_C P_{N_1}\phi_1\|_{L^2} \\&\prod_{j=2}^{k+1}N^{s_{n,k}}_j \|P_{N_j}\phi_j\|_{L^2},
    \end{split}
  \end{equation*}
where for all $C \in \mathcal{C}_{N_2}$. Now fix one $C$ and let $\xi_0$ be the center of $C$.  We partition $C = \cup R_l$ into disjoint strips with width $M=\max\{N_2^2/N_1,1\}$, which are all orthogonal to $\xi_0$,
  \[
   R_l=\Big\{\xi\in C ;\,\xi\cdot \xi_0 \in [|\xi_0| M l, |\xi_0| M(l+1)) \Big\}, \qquad |l| \approx N_1/M.
  \]
It is easy to see that $R_l \in \mathcal{R}_M(N_2)$, and we decompose
  \[
   P_C P_{N_1} u_1  \prod_{j=2}^{k+1}P_{N_j} u_j = \sum_l P_{R_l} P_{N_1} u_1  \prod_{j=2}^{k+1}P_{N_j} u_j
  \]
and we will show that the sum are almost orthogonal in $L^2(\T \times \T^n)$. Indeed, for $\xi_1 \in R_l$ we have
  \[
    |\xi_1|^2 = \frac1{|\xi_0|^{2}}|\xi_1 \cdot \xi_0|^2 + |\xi_1-\xi_0|^2 - \frac1{|\xi_0|^{2}}|(\xi_1- \xi_0) \cdot \xi_0|^2 = M^2 l^2 + O(M^2 l),
  \]
since $N_2^2 \lesssim M^2 l$. The factor $\prod_{j=2}^{k+1}P_{N_j} u_j$ only alter the time frequency by at most $O(N_2^2)$. Hence the expressions $P_{R_l} P_{N_1} u_1 \prod_{j=2}^{k+1}P_{N_j} u_j$ are localized at time frequency $M^2 l^2 + O(M^2 l)$ and thus are almost orthogonal,
  \[
    \Big\|  P_C P_{N_1} u_1  \prod_{j=2}^{k+1}P_{N_j} u_j \Big\|_{L^2}^2 \approx \sum_l \|P_{R_l} P_{N_1} u_1  \prod_{j=2}^{k+1}P_{N_j} u_j\|_{L^2}^2
  \]

If $k=1$, which implies that $n\ge 5$. Let $p\in [\frac{2(n+4)}{n},4)$ and $q\in (4,\frac{n+4}{2}]$ satisfy $\frac{1}{2}=\frac1p + \frac1q$, then by H\"older inequality we have
\[
  \|P_{R_l} P_{N_1} u_1  P_{N_2} u_2\|_{L^2} \le \|P_{R_l} P_{N_1} u_1\|_{L^p} \|P_{N_2} u_2\|_{L^q},
\]
then we apply Lemma \ref{prop:str} and Lemma \ref{local_str} to continue with
\begin{align*}
\ls{}& N_2^{\frac n2 - \frac{n+2}{p}}N_{2}^{\frac n2- \frac{n+2}q  -s_{n,1}} \Big(\frac{M}{N_2} \Big)^{\delta}\|P_{R_l}P_{N_1}\phi_1\|_{ L^2} N^{s_{n,1}}_2\|P_{N_2}\phi_2\|_{ L^2}.
\end{align*}
It is easy to see that $N_2^{\frac n2 - \frac{n+2}{p}}N_{2}^{\frac n2- \frac{n+2}q -s_{n,1}} =1$ and $\frac{M}{N_2} = \left(\frac{N_2}{N_1}+\frac1{N_2} \right)$, thus we finish the proof for this case by summing up the squares with respect to $l$.

If $k\ge 2$, let $p_{n,k}=(n+2)k$, then we have
\[
  \|P_N e^{it \Delta}\phi\|_{L^{p_{n,k}}(\T\times \T^n)}
  \ls N^{s_{n,k}} \|P_N \phi\|_{L^2(\T^n)}.
\]
Let $p$ satisfies the conditions
\begin{equation}\label{eq:p_var}
  \begin{split}
   6<p<\frac{12k}{k+2} \text{ for } n=1;\\
   4<p<\frac{4k(n+2)}{nk+2} \text{ for } n=2,3,4; \\
   \frac{2(n+4)}{n}<p<\frac{4k(n+2)}{nk+2}\text{ for } n\ge5;
  \end{split}
\end{equation}
The existence of such $p$ is implied by \eqref{admis1}; and the lower bound of $p$ implies that each $(n,p)$ is admissible, and the higher bound  guarantees that
\begin{align}\label{p_range}
  n-\frac{2(n+2)}{p}-s_{n,k} <0,
\end{align}
which will be used latter. By H\"older's inequality with some $q$ such that
\begin{align}\label{eq:scaling}
  \frac2p+\frac{k-2}{p_{n,k}}+\frac1q=\frac12,
\end{align}
where $(n,q)$ is also admissible, then we have
\begin{align*}
  &\Big\|P_{R_l} P_{N_1} u_1 \prod_{j=2}^kP_{N_j} u_j\Big\|_{L^2}\\
 \ls{}&\|P_{R_l} P_{N_1} u_1 \|_{L^p} \|P_{N_2}u_2\|_{L^p} \prod_{j=3}^k
  \|P_{N_j}u_j\|_{L^{p_{n,k}}} \|P_{N_{k+1}}u_{k+1}\|_{L^q},
\end{align*}
applying Lemma \ref{prop:str} and Lemma \ref{local_str} to continue with
\begin{align*}
  \ls{}&N_2^{n-\frac{2(n+2)}{p}-s_{n,k}}N_{k+1}^{\frac n2- \frac{n+2}q -s_{n,k}}\Big(\frac{M}{N_2} \Big)^{\delta}\|P_{R_l}P_{N_1}\phi_1\|_{ L^2} \\
  &\times N^{s_{n,k}}_2\|P_{N_2}\phi_2\|_{ L^2} N^{s_{n,k}}_{k+1}\|P_{N_{k+1}}\phi_{k+1} \|_{L^2} \prod_{j=3}^k N^{s_{n,k}}_j \|P_{N_j}\phi_j\|_{L^2},
\end{align*}
where $s_{n,k} =\frac n2-\frac1k$ defined by \eqref{ind}. In view of \eqref{eq:scaling}, we have
\[
  -n+\frac{2(n+2)}{p}+s_{n,k} = \frac n2-\frac{n+2}q-s_{n,k},
\]
Then since $-n+\frac{2(n+2)}{p}+s_{n,k}> 0$ by \eqref{p_range}, we finially get
\begin{align*}
  & \Big\|  P_{R_l} P_{N_1} u_1  \prod_{j=2}^{k+1}P_{N_j} u_j \Big\|_{L^2} \\
  \ls &\Big(\frac{N_{k+1}}{N_2}\Big)^{- n+\frac{2(n+2)}{p}+s_{n,k}} \left(\frac{M}{N_2}\right)^{\delta}\|P_{R_l} P_{N_1}\phi_1\|_{L^2} \\
  &\times \prod_{j=2}^{k+1}N^{s_{n,k}}_j \|P_{N_j}\phi_j\|_{L^2},
\end{align*}
where $0<\delta \ll1$ and $\frac{M}{N_2} = \left(\frac{N_2}{N_1}+\frac1{N_2} \right)$. We can select $p$ such that
\[
  - n+\frac{2(n+2)}{p}+s_{n,k} = \delta,
\]
and continue the estimate with
\[
  \ls\left(\frac{N_{k+1}}{N_2} \left(\frac{N_2}{N_1}+\frac1{N_2}\right)\right)^{\delta}\|P_{R_l} P_{N_1}\phi_1\|_{L^2}  \prod_{j=2}^{k+1}N^{s_{n,k}}_j \|P_{N_j}\phi_j\|_{L^2}.
\]
Then \eqref{eq:multi} follows by summing up the squares with respect to $l$.
\end{proof}

\section{Nonlinear estimates and the main results}\label{main_non}

\noindent
Before we proof the main nonlinear estimates, we need a embedding result, where the Strichartz norm could be bounded by $U^p$ type spaces. Such extension result is well known for Bourgain type spaces.
\begin{coro}
 For all $N\geq 1$, $C \in \mathcal{C}_N$ and admissible pairs $(n,p)$ we have
  \begin{equation}\label{eq:str_emb_cubes}
    \|P_C u\|_{L^p(\T\times \T^n)}
    \ls  N^{\frac{n}2-\frac{n+2}p} \|P_C u\|_{U^p_{\Delta} L^2}.
  \end{equation}
\end{coro}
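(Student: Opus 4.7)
The plan is to apply the standard transfer principle from linear Strichartz estimates to the atomic space $U^p_\Delta L^2$. The key observation is that $U^p$-atoms are time--partitioned concatenations of free solutions, so the cube Strichartz estimate \eqref{bourgain_Str_cubes} extends automatically to them via disjointness in time.

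First, I would unwind the definition: $\|u\|_{U^p_\Delta L^2}=\|e^{-it\Delta}u\|_{U^p(\R;L^2)}$, and a $U^p$-atom has the form $a(t)=\sum_{k} \chi_{[t_{k-1},t_k)}(t)\,\phi_k$ for some partition $-\infty\leq t_0<t_1<\cdots<t_K\leq \infty$ and functions $\phi_k\in L^2(\T^n)$ with $\sum_k\|\phi_k\|_{L^2}^p\leq 1$. Thus $u$ being an atom of $U^p_\Delta L^2$ means $u(t)=\sum_{k} \chi_{[t_{k-1},t_k)}(t)\,e^{it\Delta}\phi_k$.

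Second, I would estimate such an atom directly. Because the time intervals $\{[t_{k-1},t_k)\}$ are disjoint, raising to the $p$-th power gives
\begin{equation*}
\|P_C u\|_{L^p(\T\times \T^n)}^p = \sum_k \big\|\chi_{[t_{k-1},t_k)\cap \T}\,P_C e^{it\Delta}\phi_k\big\|_{L^p(\T\times\T^n)}^p \leq \sum_k \|P_C e^{it\Delta}\phi_k\|_{L^p(\T\times\T^n)}^p.
\end{equation*}
Applying Bourgain's cube estimate \eqref{bourgain_Str_cubes} term--by--term and using the atom normalization yields
\begin{equation*}
\|P_C u\|_{L^p(\T\times \T^n)}^p \ls N^{p(\frac n2-\frac{n+2}p)} \sum_k \|P_C \phi_k\|_{L^2}^p \ls N^{p(\frac n2-\frac{n+2}p)},
\end{equation*}
so the desired bound holds with constant $1$ on the atomic side.

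Third, for a general $u\in U^p_\Delta L^2$, I would take an $\eps$--almost optimal atomic decomposition $e^{-it\Delta}u=\sum_j \lambda_j a_j$ with $\sum_j|\lambda_j|\leq (1+\eps)\|u\|_{U^p_\Delta L^2}$, and conclude by linearity and the triangle inequality in $L^p$:
\begin{equation*}
\|P_C u\|_{L^p(\T\times\T^n)} \leq \sum_j |\lambda_j|\,\|P_C (e^{it\Delta}a_j)\|_{L^p(\T\times\T^n)} \ls N^{\frac n2-\frac{n+2}p}\,(1+\eps)\|u\|_{U^p_\Delta L^2},
\end{equation*}
then let $\eps\to 0$. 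There is no real obstacle here beyond bookkeeping: the only point that deserves care is the disjoint--support identity which upgrades the atom bound from an $\ell^1$ to an $\ell^p$ sum, so that the atom normalization $\sum_k\|\phi_k\|_{L^2}^p\leq 1$ is consumed exactly. This is the same transfer principle that produces the embedding $U^p_\Delta L^2\hookrightarrow L^p_{t,x}$ in the Euclidean setting (cf.\ \cite[Proposition 2.19]{HHK09}), specialized here to the cube--localized Strichartz estimate on $\T\times\T^n$.
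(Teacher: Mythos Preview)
Your proposal is correct and follows exactly the approach the paper intends: the paper's ``proof'' is simply a reference to \cite[Proposition~2.19]{HHK09} and \cite[Corollary~3.2]{HTT11}, which is precisely the atomic transfer principle you have spelled out. Your write-up makes explicit the key step (disjointness of the partition intervals yields the $\ell^p$ sum matching the atom normalization) that the cited references encapsulate.
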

\begin{proof}
We refer the reader to \cite[Proposition 2.19]{HHK09} and \cite[Corollary 3.2]{HTT11} for the detailed proof.
\end{proof}

Now we are ready to proof our main $L^2$ multi linear estimates.
\begin{prop}\label{prop:L2}
Assume $n=1$, $k\ge 3$; $n=2,3,4$, $k\ge2$ and  $n\ge5$, $k\ge1$. For any $N_i\geq N_{i+1}\geq 1$ for all $i\in \{1,2,\ldots,k-1\}$ and any interval $I\subset [0,2\pi]$. Then there exists a $\delta>0$ such that
  \begin{equation}\label{eq:l2_est_linear}
    \Big\|\prod_{j=1}^{k+1}P_{N_j} u_j\Big\|_{L^2(I\times \T^n)}\ls \Big(\frac{N_{k+1}}{N_1} +\frac{1}{N_2}\Big)^\delta \|P_{N_1}u_1\|_{Y^0}\prod_{j=2}^{k+1}\|P_{N_j}u_j\|_{Y^{s_{n,k}}},
  \end{equation}
where $s_{n,k}= \frac n2-\frac1k$ is the critical index.
\end{prop}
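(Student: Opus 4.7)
\medskip

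\noindent\textbf{Proof proposal.} The plan is to transfer the free-evolution multi-linear Strichartz estimate from Theorem \ref{main_th} to the $Y^s$ setting in three steps: first, upgrade Theorem \ref{main_th} to inputs in $U^2_\Delta$; second, prove a companion estimate with all inputs in $U^p_\Delta$ for some admissible $p$, at the cost of losing the decay factor; and finally interpolate via Lemma \ref{interpolation} to obtain a $V^2_\Delta$ estimate with a logarithmic loss, which can be absorbed into the decay factor and combined with the chain $Y^s \hookrightarrow V^2_\Delta H^s$ from \eqref{embedding}. Throughout I would assume without loss that $N_1\ge N_2\ge\cdots\ge N_{k+1}\ge 1$, since any other ordering reduces to this one after relabeling.

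\emph{Small-constant bound.} Using the atomic definition of $U^2_\Delta L^2$, any atom is of the form $a(t)=\sum_l \chi_{[t_l,t_{l+1})}(t)\, e^{it\Delta}\phi_l$ with $\sum_l\|\phi_l\|_{L^2}^2\le 1$. Applying Theorem \ref{main_th} on each time sub-interval and exploiting disjointness of the time supports, together with multilinearity in the atomic decomposition, one obtains the $U^2$ version
\begin{equation*}
\Big\|\prod_{j=1}^{k+1}P_{N_j}u_j\Big\|_{L^2(\R\times\T^n)}\lesssim \Big(\frac{N_{k+1}}{N_1}+\frac{1}{N_2}\Big)^{\delta}\|P_{N_1}u_1\|_{U^2_\Delta L^2}\prod_{j=2}^{k+1}N_j^{s_{n,k}}\|P_{N_j}u_j\|_{U^2_\Delta L^2}.
\end{equation*}

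\emph{Large-constant bound.} I would then select admissible pairs $(n,p_j)$ with $\sum_j 1/p_j=1/2$, chosen in parallel with the exponent selection used in the proof of Theorem \ref{main_th} so that each $(n,p_j)$ sits strictly inside the admissible range \eqref{admis}. By H\"older's inequality combined with the $U^p$ Strichartz embedding \eqref{eq:str_emb_cubes},
\begin{equation*}
\Big\|\prod_{j=1}^{k+1}P_{N_j}u_j\Big\|_{L^2}\lesssim \prod_{j=1}^{k+1}N_j^{n/2-(n+2)/p_j}\|P_{N_j}u_j\|_{U^{p_j}_\Delta L^2},
\end{equation*}
which, after rewriting the scaling factors in terms of $s_{n,k}$, takes the schematic form $C_1\,\|P_{N_1}u_1\|_{U^{p_1}_\Delta L^2}\prod_{j\ge 2}N_j^{s_{n,k}}\|P_{N_j}u_j\|_{U^{p_j}_\Delta L^2}$ with $C_1$ bounded by a fixed power of $N_1$.

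\emph{Interpolation and conclusion.} Applying the $(k{+}1)$-linear version of Lemma \ref{interpolation} to the multilinear map $(u_1,\ldots,u_{k+1})\mapsto\prod_j P_{N_j}u_j$, using the $U^2$ bound with constant $C_2=(N_{k+1}/N_1+1/N_2)^{\delta}\prod_{j\ge 2}N_j^{s_{n,k}}$ and the $U^{p_j}$ bound with constant $C_1$, I obtain a $V^2_\Delta L^2$ (resp.\ $V^2_\Delta H^{s_{n,k}}$) bound with an additional factor of $(\log(C_1/C_2)+1)^{k+1}$. Because $C_1/C_2$ is polynomial in $N_1$, this loss is a power of $\log N_1$, which is absorbed by replacing $\delta$ with a slightly smaller $\delta'>0$ in the decay factor $(N_{k+1}/N_1+1/N_2)^{\delta}$, at least once that ratio is small enough; in the remaining regime where $N_{k+1}/N_1+1/N_2\gtrsim 1$, the bound reduces to a straightforward H\"older argument using \eqref{eq:str_emb_cubes} directly on the $V^2$ norms. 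The embedding $Y^s\hookrightarrow V^2_\Delta H^s$ from \eqref{embedding} then yields \eqref{eq:l2_est_linear}.

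\emph{Main obstacle.} The most delicate point is the bookkeeping in the large-constant step: the admissible exponents $p_j$ must be selected so that H\"older exponents sum to $1/2$, every pair $(n,p_j)$ lies in the admissible range \eqref{admis}, and the aggregate power of $N_1$ in $C_1/C_2$ stays polynomial so that the logarithmic interpolation loss is genuinely absorbable. The choice mirrors the one made in the proof of Theorem \ref{main_th}, but here it must accommodate all $k{+}1$ factors simultaneously rather than benefiting from the orthogonality partition used there. Once this selection is fixed, the interpolation and the embedding into $Y^s$ go through without further difficulty.
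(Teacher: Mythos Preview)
Your overall plan --- transfer Theorem~\ref{main_th} to $U^2_\Delta$ via atoms, establish a companion $U^{p}_\Delta$ bound by H\"older and the Strichartz embedding, interpolate through Lemma~\ref{interpolation}, and conclude via $Y^s\hookrightarrow V^2_\Delta H^s$ --- is exactly the paper's route. The gap is in your large-constant step. If you apply \eqref{eq:str_emb_cubes} to $P_{N_1}u_1$ directly, every admissible exponent $p_1$ forces a factor $N_1^{n/2-(n+2)/p_1}$ with \emph{strictly positive} exponent, because the admissible range \eqref{admis} always lies strictly above the scaling exponent $2(n+2)/n$. Thus $C_1$ carries a genuine power of $N_1$, and $\log(C_1/C_2)$ is of order $\log N_1$ rather than of order $\log\big((N_{k+1}/N_1+1/N_2)^{-1}\big)$. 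Take for instance $N_{k+1}=1$ and $N_2\sim(\log N_1)^{1/2}$: the decay parameter is $(\log N_1)^{-1/2}\to 0$, so you are outside your ``$\gtrsim 1$'' fallback regime, yet the interpolation loss $(\log N_1)^{k+1}$ cannot be absorbed into any power $(\log N_1)^{-(\delta-\delta')/2}$. No choice of admissible $p_j$'s repairs this, since the positivity of the $N_1$-exponent is unavoidable.

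What you are missing is a preliminary almost-orthogonality reduction that the paper performs before anything else. Since $\prod_{j\ge 2}P_{N_j}u_j$ has spatial Fourier support in a ball of radius $O(N_2)$, one decomposes $P_{N_1}u_1=\sum_{C\in\mathcal{C}_{N_2}} P_C P_{N_1}u_1$ and observes that the corresponding products are almost orthogonal in $L^2(I\times\T^n)$; it then suffices to bound a single block $P_C P_{N_1}u_1\prod_{j\ge 2}P_{N_j}u_j$. After this localization the Strichartz embedding \eqref{eq:str_emb_cubes} on the top factor is applied to $P_C P_{N_1}u_1$ with $C$ of side $N_2$, producing $N_2^{n/2-(n+2)/p_1}$ in place of $N_1^{n/2-(n+2)/p_1}$. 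With the same exponent choices as in Theorem~\ref{main_th} one then gets a $U^{p}$ bound with constant $C_1\lesssim (N_{k+1}/N_2)^{\delta'}\prod_{j\ge 2}N_j^{s_{n,k}}$, so that $C_1/C_2$ is controlled by a power of $(N_{k+1}/N_1+1/N_2)^{-1}$ and the logarithmic loss from Lemma~\ref{interpolation} is absorbed uniformly by shrinking $\delta$. The cube localization at scale $N_2$ --- not a refinement of the exponent bookkeeping --- is the missing idea.
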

\begin{proof}
It is enough to prove
  \begin{align}\label{main_1}
    \Big\|P_{N_1} P_C u_1\prod_{j=2}^{k+1}P_{N_j} u_j\Big\|_{L^2}\ls &\|P_{N_1}u_1\|_{Y^0} \Big(\frac{N_{k+1}}{N_1}+ \frac{1}{N_2}\Big)^\delta \nonumber\\
    &\prod_{j=2}^kN^{s_{n,k}}_j\|P_{N_j}u_j\|_{Y^0},
  \end{align}
where $I=[0,2\pi]$ and $C \in \mathcal{C}_{N_2}$. In view of \eqref{embedding}, we could replace $Y^0$ by $V^2_\Delta L^2$. Then by Lemma~\ref{interpolation}, \eqref{main_1} follows from the following two multi-linear estimates:
  \begin{equation}\label{main_12}
    \begin{split}
     \Big\|P_C P_{N_1} u_1 \prod_{j=2}^{k+1}P_{N_j} u_j\Big\|_{L^2} \ls{} \left(\frac{N_{k+1}}{N_2}\right)^{\delta'}\prod_{j=2}^{k+1}N^{s_{n,k}}_j
      \|P_{N_j}u_j\|_{U^{r_n}_\Delta L^2}
    \end{split}
  \end{equation}
where $r_1=6, r_2 = r_3 =4, r_n =\frac{2(n+4)}{n}$ for $n\ge 4$, and
  \begin{equation}\label{main_13}
    \Big\|P_C P_{N_1} u_1  \prod_{j=2}^{k+1}P_{N_j} u_j\Big\|_{L^2}
    \ls   \left(\frac{N_{k+1}}{N_1}+\frac{1}{N_2}\right)^{\delta'} \prod_{j=2}^{k+1}N^{s_{n,k}}_j \|P_{N_j}u_j\|_{U^2_\Delta L^2},
  \end{equation}
for some $\delta'>0$.

We first consider \eqref{main_12}. If $k=1$, which implies that $n\ge 5$. Let $p\in [\frac{2(n+4)}{n},4)$ and $q\in (4,\frac{n+4}{2}]$ satisfy $\frac{1}{2}=\frac1p + \frac1q$, then by H\"older inequality we have
\[
  \|P_{C} P_{N_1} u_1  P_{N_2} u_2\|_{L^2} \le \|P_{C} P_{N_1} u_1\|_{L^p} \|P_{N_2} u_2\|_{L^q},
\]
then we apply Lemma \ref{prop:str} to continue with
\begin{align*}
  \ls{}&N_2^{\frac n2 - \frac{n+2}{p}}N_{2}^{\frac n2- \frac{n+2}q -s_{n,1}}\|P_{C}P_{N_1}\phi_1\|_{ L^2} N^{s_{n,1}}_2\|P_{N_2}\phi_2\|_{ L^2}.
\end{align*}
Since $N_2^{\frac n2 - \frac{n+2}{p}}N_{2}^{\frac n2- \frac{n+2}q -s_{n,1}} =1$, thus we get \eqref{main_12} with $k=1$.

If $k\ge 2$, by H\"older's inequality with $p_{n,k}$, $p$ and $q$ being the same ones in the proof of Theorem \ref{main_th} and we get
\begin{align*}
  &\Big\|P_C P_{N_1} u_1 \prod_{j=2}^kP_{N_j} u_j\Big\|_{L^2}\\
  \leq&\|P_C P_{N_1} u_1 \|_{L^p} \|P_{N_2}u_2\|_{L^p} \prod_{j=3}^k
  \|P_{N_j}u_j\|_{L^{p_{n,k}}} \|P_{N_{k+1}}u_{k+1}\|_{L^q},
\end{align*}
then by \eqref{eq:str_emb_cubes}, we continue with
\begin{align*}
  \ls{}&N_2^{n-\frac{2(n+2)}{p}-s_{n,k}}N_{k+1}^{\frac n2-\frac{n+2}q-s_{n,k}} \|P_CP_{N_1}u_1\|_{U^{p}_\Delta L^2} N^{s_{n,k}}_2\|P_{N_2}u_2\|_{U^{p}_\Delta L^2}\\
  &\times N^{s_{n,k}}_{k+1}\|P_{N_{k+1}}u_{k+1}\|_{U^{q}_\Delta L^2} \prod_{j=3}^kN^{s_{n,k}}_j
  \|P_{N_j}u_j\|_{U_{\Delta}^{p_{n,k}}L^2},
\end{align*}
By \eqref{embed} we have $U^{r_n}_\Delta L^2 \hookrightarrow U^{p}_\Delta L^2\hookrightarrow U^{p_{n,k}}_\Delta L^2\hookrightarrow U^{q}_\Delta L^2$. In view of the choice of $p,q$ we have
\begin{align*}
  n-\frac{2(n+2)}{p}-s_{n,k} <&0,\\
  -n+\frac{2(n+2)}{p}+s_{n,k} =& \frac n2-\frac{n+2}q-s_{n,k}
\end{align*}
Then we finish the proof by letting $\delta'=-n+\frac{2(n+2)}{p}+s_{n,k}$.

In view of the atomic structure of the $U^2$ spaces (see e.g. \cite[Proposition 2.19]{HHK09}), the second bound \eqref{main_13} reduce to: for $u_j=e^{it\Delta} \phi_j$ we have
\begin{equation*}
     \Big\|P_C P_{N_1} u_1  \prod_{j=2}^{k+1}P_{N_j} u_j \Big\|_{L^2}
    \ls  \left(\frac{N_{k+1}}{N_1}+\frac{1}{N_2}\right)^{\delta'} \prod_{j=2}^{k+1}N^{s_{n,k}}_j \|P_{N_j}u_j\|_{L^2},
\end{equation*}
which is just the Multi-linear Strichartz estimate \eqref{eq:multi}.
\end{proof}

In order to prove Theorems \ref{main} we need the following nonlinear estimate, which is implied by Proposition \ref{prop:L2}. The argument is standard, see c.f. \cite{HHK09} and \cite{HTT11}.
\begin{prop}\label{prop:nonlinear}
Assume $(n,k)$ as in Proposition \ref{prop:L2} and $s\geq s_{n,k}$, $0<T\leq 2\pi$, and $u_l\in   X^s([0,T))$, $l=1,\ldots,2k+1$. Then
\begin{equation}\label{eq:non}
  \Big\|\mathcal{I}(\prod_{k=1}^{2k+1} \widetilde{u}_l)\Big\|_{X^s([0,T))}
  \ls  \sum_{j=1}^{2k+1}\|u_j\|_{X^s([0,T))}\prod_{\genfrac{}{}{0pt}{} {l=1}{l\not=j}}^{2k+1}\|u_l\|_{X^{s_{n,k}}([0,T))},
\end{equation}
where $\widetilde{u}_l$ denotes either $u_l$ or $\overline{u}_l$.
\end{prop}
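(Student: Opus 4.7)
The plan is to dualize using Lemma \ref{inhom} and reduce the proposition to a multilinear estimate on $(2k+2)$ factors (the $2k+1$ copies of $u_l$ or $\overline{u}_l$, plus a test function $v$ with $\|v\|_{Y^{-s}} = 1$). So the goal becomes: for any extension (still denoted $u_l, v$) of the relevant functions to $\R$,
\[
\left| \int_\R \int_{\T^n} \Bigl( \prod_{l=1}^{2k+1} \widetilde{u}_l \Bigr) \overline{v} \, dx \, dt \right| \;\lesssim\; \sum_{j} \|u_j\|_{X^s} \prod_{l \neq j} \|u_l\|_{X^{s_{n,k}}}.
\]
Perform a Littlewood-Paley decomposition of each $u_l$ at frequency $N_l$ and of $v$ at frequency $N_0$. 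Relabel so that $N_{(0)} \geq N_{(1)} \geq \cdots \geq N_{(2k+1)}$ are the ordered frequencies of the $2k+2$ factors; frequency conservation forces $N_{(0)} \approx N_{(1)}$ (otherwise the spatial integral vanishes).

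The key step is splitting the $2k+2$ dyadic pieces into two groups of exactly $k+1$ factors via Cauchy-Schwarz:
\[
\bigl|\textstyle\int (\prod \text{all } k+1)(\prod \text{other } k+1)\bigr|
\leq \|\text{group } A\|_{L^2(\R\times\T^n)} \, \|\text{group } B\|_{L^2(\R\times\T^n)},
\]
and then apply Proposition \ref{prop:L2} to each $L^2$ norm. I would form the groups so that each contains one of the two top-frequency factors (one top factor of $A$ and one top factor of $B$), placing $v$ in whichever group is convenient. This way, each application of Proposition \ref{prop:L2} produces a gain $(N_{\text{low in group}}/N_{\text{top in group}} + 1/N_{\text{second in group}})^{\delta}$, while the top factor in each group can be measured in $Y^0$ (no derivative loss) and the rest in $Y^{s_{n,k}}$. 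Using the embedding $X^s \hookrightarrow Y^s$, this matches the right-hand side of \eqref{eq:non}: one of the $u_j$ carries the extra $s - s_{n,k}$ derivatives (which compensates $N_{(0)} \approx N_{(1)}$ raised to $s$ versus $s_{n,k}$), while every other factor gives an $X^{s_{n,k}}$ norm.

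The final step is to sum over dyadic frequency configurations. For the two top frequencies $N_{(0)} \approx N_{(1)}$ and the remaining $2k$ dyadic parameters, the product of the two decay factors from Proposition \ref{prop:L2} supplies a gain of the form $(N_{\min}/N_{\max})^{\delta'}$ for some of the $N$'s and $(1/N_{\text{middle}})^{\delta'}$ for others, which is enough to carry out all the dyadic sums by Cauchy-Schwarz once one uses the $\ell^2$ summability built into the $X^s$ and $Y^s$ norms.

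The main obstacle I expect is combinatorial: organizing the many possible frequency orderings (in particular, where $v$'s frequency $N_0$ sits in the ordered list, and which of the $u_l$ is paired as the top factor of each group) so that in every case the derivative bookkeeping works out to $\|u_j\|_{X^s}$ for a single distinguished index $j$ (the one corresponding to the overall maximal frequency) and $\|u_l\|_{X^{s_{n,k}}}$ for all others, and simultaneously the gain from Proposition \ref{prop:L2} is strong enough to be summable. This is mechanical but delicate, and essentially follows the template of \cite[Proposition 4.1]{HTT11} and \cite[Section 3]{HHK09}, adapted to the $(2k+2)$-linear setting rather than the sextilinear one.
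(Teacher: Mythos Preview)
Your proposal is correct and follows essentially the same route as the paper's proof: dualize via Lemma~\ref{inhom}, dyadically decompose all $2k+2$ factors, use Cauchy--Schwarz to split into two groups of $k+1$ factors each containing one of the two top frequencies, apply Proposition~\ref{prop:L2} to each group, and sum. The paper makes the grouping explicit by taking odd-indexed factors $\{N_1,N_3,\ldots,N_{2k+1}\}$ versus even-indexed factors $\{N_0,N_2,\ldots,N_{2k}\}$ (after ordering $N_1\ge\cdots\ge N_{2k+1}$), which automatically places a top frequency in each group and interleaves the remaining scales; it then splits into the two cases $N_0\approx N_1\ge N_2$ and $N_0\le N_2\approx N_1$ exactly as you anticipate under ``where $v$'s frequency sits.''
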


\begin{proof} We can assume that $\widetilde{u}_l \in H^{\infty}(\T^n)$ by density argument. Denote $I=[0,T)$, and let $N\geq 1$. Proposition \ref{inhom} implies
  \[
    \Big\|\mathcal{I}(\prod_{l=1}^{2k+1} \widetilde{u}_l)\Big\|_{X^s(I)}\leq \sup_{\genfrac{}{}{0pt}{}{v \in Y^{-s}(I):}{\|v\|_{Y^{-s}}=1}} \int_0^{2\pi} \int_{\T^n}\prod_{l=1}^{2k+1} \widetilde{u}_l \, \overline{v} dx dt.
  \]
Denote $u_0 =  v$.  Then it is sufficient to prove
  \begin{equation*}
    \left| \int_{I \times \T^n} \prod_{l=0}^{2k+1} \tilde u_l \ dx dt\right|
    \lesssim  \| u_0\|_{Y^{-s}(I)}  \sum_{j=1}^{2k+1}\left(\|u_j\|_{X^s(I)} \prod_{\genfrac{}{}{0pt}{}{l=1}{l\not=j}}^{2k+1}\|u_l\|_{X^{s_{n,k}}(I)}\right)
  \end{equation*}
In view of the definition of $X^s(I)$, for the above we only need to prove
 \begin{equation}\label{2k+1lin_g}
    \left| \int_{I \times \T^n} \prod_{l=0}^{2k+1} \tilde u_l \ dx dt\right|
    \lesssim  \| u_0\|_{Y^{-s}}  \sum_{j=1}^{2k+1}\left(\|u_j\|_{X^s} \prod_{\genfrac{}{}{0pt}{}{l=1}{l\not=j}}^{2k+1}\|u_l\|_{X^{s_{n,k}}}\right).
 \end{equation}
  We decompose $u_l$ into frequency dyadic pieces,
  \[
   \widetilde{u}_l=\sum_{N_l\geq 1} P_{N_l} \widetilde{u}_l.
  \]
If $N_1\ge N_2\ge \cdots\ge N_{2k+1}$, and
  \[
    \left| \int_{I \times \T^n} \prod_{l=0}^{2k+1} P_{N_l} \tilde u_l \ dx dt\right| \neq 0,
  \]
then we must have that
  \[
    N_1 \thickapprox \max\{N_0,N_2\}.
  \]
Then, by the Cauchy-Schwarz inequality and symmetry it suffices to show that
  \begin{equation}\label{2k+1lin_ga}
  \begin{split}
     S&= \sum_{ \mathcal N}\Big\| \prod_{l=0}^k P_{N_{2l+1}}\widetilde{u}_{2l+1}\Big\|_{L^2} \Big\|\prod_{l=0}^k P_{N_{2l}}\widetilde{u}_{2l}\Big\|_{L^2}
     \\& \lesssim \| u_0\|_{Y^{-s}} \sum_{j=1}^{2k+1}\|u_j\|_{X^s} \prod_{\genfrac{}{}{0pt}{}{l=1}{l\not=j}}^{2k+1}\|u_l\|_{X^{s_{n,k}}},
  \end{split}
  \end{equation}
where $\mathcal N$ is a subset of
  \[
    \{(N_0,N_1,\ldots,N_{2k+1})\in\Z^{2k+2}; N_i \text{ are dyadic numbers}\}
  \]
satisfying
  \[
   N_1\ge N_2\ge \cdots\ge N_{2k+1}, \quad \max\{N_0,N_2\} \approx N_1.
  \]
We subdivide the sum into two parts $S = S_1+S_2$:

First assume that $N_2\leq N_0 \approx N_1$, then Proposition \ref{prop:L2} implies
  \[
  S_1 \lesssim \sum_{S_1} N_1^{-2s_{n,k}} \left(\frac{N_{2k+1}}{N_1}+ \frac{1}{N_3}\right )^\delta \left(\frac{N_{2k}}{N_0}+ \frac{1}{N_2}\right )^\delta \prod_{l=0}^{2k+1} N_l^{s_{n,k}}
  \|P_{N_l}u_l\|_{Y^0}.
  \]
Sum $N_2$, $\cdots$, $N_{2k+1}$ together by Cauchy-Schwarz, and obtain
  \[
  S_1 \lesssim \sum_{N_0 \approx N_1} \| P_{N_0}u_0\|_{Y^0} \|
  P_{N_1}u_1\|_{Y^0} \prod_{l=2}^{2k+1} \|u_l\|_{Y^{s_{n,k}}}.
  \]
Then sum $N_1$ by Cauchy-Schwarz we get
  \[
  S_1 \lesssim \| u_0\|_{Y^{-s}} \| u_1\|_{Y^{s}} \prod_{l=2}^{2k+1} \|u_l\|_{Y^{s_{n,k}}},
  \]
  as needed.

Now if $N_0 \leq N_2 \approx N_1$, then by Proposition \ref{prop:L2} we have
  \[
    S_2 \lesssim \sum_{S_2} N_1^{-2s_{n,k}} \left(\frac{N_{2k+1}}{N_1}+
    \frac{1}{N_3}\right )^\delta  \prod_{l=0}^{2k+1} N_l^{s_{n,k}}
  \|P_{N_l}u_l\|_{Y^0}
  \]
  By Cauchy-Schwarz we sum $N_3$, $\cdots$  $N_{2k+1}$ together and obtain
  \[
  S_2 \lesssim \sum_{N_0 \leq N_1 \approx N_2} N_0^{s_{n,k}} \|
  P_{N_0}u_0\|_{Y^0} \| P_{N_1}u_1\|_{Y^0} \|
  P_{N_2}u_2\|_{Y^0} \prod_{l=3}^{2k+1} \|u_l\|_{Y^{s_{n,k}}}.
  \]
Then apply Cauchy-Schwarz with respect to $N_0$ to obtain
  \[
  S_2 \lesssim \sum_{N_1\approx N_2} N_1^{s+s_{n,k}} \|u_0\|_{Y^{-s}} \|
  P_{N_1}u_1\|_{Y^0} \| P_{N_2}u_2\|_{Y^0} \prod_{l=3}^{2k+1} \|u_l\|_{Y^{s_{n,k}}}.
  \]
  Finally, we apply Cauchy-Schwarz with respect to $N_1$ to obtain
  \[
  S_2 \lesssim \| u_0\|_{Y^{-s}} \| u_1\|_{Y^{s}} \prod_{l=2}^{2k+1} \|u_l\|_{Y^{s_{n,k}}}.
  \]
  Thus the proof is complete.
\end{proof}

\begin{proof}[Proof of Theorem \ref{main}] The general argument of the proof is
  well-known, see e.g. \cite{HHK09,HTT11}. We only study the critical case $s=s_{n,k}$ for small data here for an example, and refer the readers to \cite{HTT11} for more details.

Our aim is to solve the equation
\begin{align}\label{int_eq}
u= &e^{it\Delta} \phi + c\int_{0}^t e^{i(t-s)\Delta} (|u|^{2k}u)(s) ds \nonumber\\
=&e^{it\Delta} \phi + c\mathcal{I}(|u|^{2k}u).
\end{align}
From \eqref{eq:linear} we have
\[
\|e^{it\Delta} \phi\|_{X^{s_{n,k}}([0,T))} \le \|\phi\|_{H^{s_{n,k}}},
\]
and from \eqref{eq:non} we have
\begin{align*}
      \Big\|\mathcal{I}(|u|^{2k}u)\Big\|_{X^{s_{n,k}}([0,T))}
      \leq{}  c \|u\|^{2k+1}_{X^{s_{n,k}}([0,T))},
\end{align*}
and
  \begin{equation}\label{diff}
    \begin{split}
      &\Big\|\mathcal{I}(|u|^{2k}u-|v|^{2k}v)\Big\|_{X^{s_{n,k}}([0,T))}
      \\
      \leq{} & c
      (\|u\|^{2k}_{X^{s_{n,k}}([0,T))}+\|v\|^{2k}_{X^{s_{n,k}}([0,T))}) \|u-v\|_{X^{s_{n,k}}([0,T))},
    \end{split}
  \end{equation}
for all $0<T\leq 2\pi$ and $u,v\in X^{s_{n,k}}([0,T))$.

Now we assume that the initial data $\|\phi\|_{H^{s_{n,k}}}\leq \eps$ with $0<\eps\ll 1$ to be determined, and consider the compact set
  \begin{align*}
    D_{\delta}:=\{u \in X^{s_{n,k}}([0,2\pi))\cap C([0,2\pi);H^{s_{n,k}}(\T^n)):
    \|u\|_{X^{s_{n,k}}([0,2\pi))}\leq \delta\}.
  \end{align*}
where the parameters $\delta$ will be chosen latter. We are about to solve \eqref{int_eq} by the contraction mapping principle in $D_{\delta}$, for $\phi\in B_{\eps}$. We have
  \[
  \|e^{it\Delta} \phi + c\mathcal{I}(|u|^{2k}u)\|_{X^{s_{n,k}}([0,2\pi))}\leq \eps +c\delta^{2k+1}\leq \delta,
  \]
  by choosing
  \begin{equation}\label{eps}
    \delta=(4c)^{-\frac1{2k}} \text{ and } \eps=\delta/2.
  \end{equation}
By \eqref{diff} and \eqref{eps} we have
  \[
  \|\mathcal{I}(|u|^{2k}u - |v|^{2k}v)\|_{X^{s_{n,k}}([0,2\pi))}\leq \frac12 \|u-v\|_{X^{s_{n,k}}([0,2\pi))},
  \]
  which shows that the nonlinear map \eqref{int_eq} has a unique fixed
  point in $D_{\delta}$, thus we solve \eqref{eq:nls} with small initial data.
\end{proof}

\end{document}